\documentclass[11pt]{amsart}

\usepackage{graphicx} 
\usepackage{tikz}

\usepackage{subfig}

\usepackage{listofitems}
\usepackage{pgfplots}
\pgfplotsset{compat=1.15}
\usepackage{mathrsfs}
\usetikzlibrary{arrows}
\usepackage{enumerate}
\usepackage{mathtools}
\usepackage[scale]{mnotes}
\usepackage{dsfont}

\usepackage{soul}
\captionsetup{font=normalsize,labelfont={bf,sf}}
\usetikzlibrary{patterns}

\usepackage[a4paper,left=37mm,right=37mm,top=35mm,bottom=40mm,marginpar=20mm]{geometry} 

\usepackage[utf8]{inputenc}
\usepackage{amsmath}
\usepackage{amsfonts}
\usepackage{amssymb}
\usepackage{amsthm}
\usepackage{esint}
\usepackage{bbm}
\usepackage{bm}
\usepackage{cancel}
\usepackage{color, comment}
\usepackage[square,sort,comma,numbers]{natbib}
\setlength{\bibsep}{5pt}
\usepackage[pdftex,     
plainpages=false,   
breaklinks=true,    
colorlinks=true,
linkcolor=blue,
citecolor=blue,
pdftitle={Total variation as Gamma-limit of BMO seminorms},
pdfauthor={Adolfo Arroyo Rabasa, Paolo Bonicatto, Giacomo Del Nin}
]{hyperref} 
\usepackage[normalem]{ulem}

\usepackage{hyperref}       
\makeatletter
\def\@cite#1#2{[\textbf{#1\if@tempswa , #2\fi}]}
\makeatother


\newcommand{\Ksf}{\mathsf K}

\newcommand{\Ccal}{\mathcal{C}}
\newcommand{\Dcal}{\mathcal{D}}

\newcommand{\Hcal}{\mathcal{H}}

\newcommand{\Lcal}{\mathcal{L}}

\newcommand{\Ncal}{\mathcal{N}}
\newcommand{\Ocal}{\mathcal{O}}

\newcommand{\Qcal}{\mathcal{Q}}

\newcommand{\Zbb}{\mathbb{Z}}

\newcommand{\e}{\varepsilon}
\newcommand{\eps}{\varepsilon}
\renewcommand{\H}{\mathcal{H}}

\newcommand{\F}{\mathcal{F}}
\newcommand{\G}{\mathcal{G}}
\newcommand{\I}{\mathcal{I}}

\newcommand{\K}{{\sf K}}

\renewcommand{\P}{\mathcal{P}}
\newcommand{\Q}{\mathcal{Q}}

\newcommand{\R}{\mathbb{R}}

\newcommand*{\genbf}[1]{\ifmmode\mathbf{#1}\else\textbf{#1}\fi}

\newcommand{\Osc}{\mathrm{Osc}}
\newcommand{\loc}{\mathrm{loc}}

\newcommand{\1}{\mathbbm{1}}

\renewcommand{\1}{\mathds1}
\newcommand{\dd}{\mathrm{d}}

\newtheorem{theorem}{Theorem}
\newtheorem{lemma}[theorem]{Lemma}
\newtheorem{proposition}[theorem]{Proposition}

\newtheorem{remark}[theorem]{Remark}

\title[Total variation as \texorpdfstring{$\Gamma$}{Gamma}-limit of \texorpdfstring{$BMO$}{BMO} seminorms]{Representation of the total variation \\ as a  \texorpdfstring{$\Gamma$}{Gamma}-limit of \texorpdfstring{$BMO$}{BMO}-type seminorms}

\author{Adolfo Arroyo-Rabasa}
\address[A.\ Arroyo-Rabasa]{Université catholique de Louvain,
	Building Marc de Hemptinne,
	Chemin du Cyclotron 2,
	1348 Louvain-la-Neuve, Belgium}
\email{adolfo.arroyo@uclouvain.be}

\author{Paolo Bonicatto}
\address[P.\ Bonicatto]{Mathematics Institute, University of Warwick,
	Zeeman Building, CV4 7HP Coventry, UK}
\email{Paolo.Bonicatto@warwick.ac.uk}

\author{Giacomo Del Nin}
\address[G.\ Del Nin]{Mathematics Institute, University of Warwick,
	Zeeman Building, CV4 7HP Coventry, UK}
\email{Giacomo.Del-Nin@warwick.ac.uk}

\date{\today}

\begin{document}

	\begin{abstract}
		We address a question raised by Ambrosio, Bourgain, Brezis, and Figalli, proving that the $\Gamma$-limit, with respect to the $L^1_\loc$ topology, of a family of $BMO$-type seminorms is given by $\tfrac14$ times the total variation seminorm. Our method also
		yields an alternative proof of previously known lower bounds for the pointwise limit and conveys a compactness result in $L^1_\loc$ in terms of the boundedness of the $BMO$-type seminorms.
		\vspace{12pt}
		
		\noindent\textsc{MSC (2020):}  26B30, 26D10 (primary); 26A45, 49J45 (secondary).
		\vspace{4pt}
		
		\noindent\textsc{Keywords:} approximation, BMO, BV,  Gamma-convergence.
		\vspace{5pt}
		
	\end{abstract}	
	
	\maketitle
	
	\section{Introduction}
	
	In recent years there has been a significant interest in the relations between the gradient seminorm $|Df|(\R^n)$ of a $BV$ function $f : \R^n \to \R$ and the $\e$-scale $BMO$-type  seminorms
	\[
	\left\{ \fint_{Q} \Big\vert f(x) - \fint_{Q} f\Big\vert \, \dd x\right\}_{Q \in \mathcal F_\eps},
	\]
	where $\F_\e$ is a collection of disjoint $\eps$-cubes $Q \subset \R^n$.
	This question is rooted in the work of Bourgain, Brezis and Mironescu~\cite{BBM}, who introduced the space
	\begin{equation*}
		B := \{f \in L^1(Q_0):\, \|f\|_B < \infty\},
	\end{equation*}
	where $Q_0=(0,1)^n$,
	\begin{equation*}
		\|f\|_B \coloneqq \sup_{\e \in (0,1)} [f]_\eps
	\end{equation*}
	and 
	\begin{equation}\label{eq:def_[f]epsilon}
		[f]_\eps \coloneqq \sup_{\G_\eps} \eps^{n-1}  \sum_{Q \in \G_\eps} \fint_{Q} \Big\vert f(x) - \fint_{Q} f\Big\vert\, \dd x,\qquad f\in L^1(Q_0),
	\end{equation}
	where $\G_\eps$ denotes a family, with cardinality at most $\eps^{1-n}$, of disjoint $\eps$-cubes $Q \subset Q_0$ with their edges parallel to the canonical coordinate axes.
	The introduction of the space $B$ aimed at creating a common framework for various regularity results, which the following statement can broadly summarize: every function $f : Q_0 \to \mathbb Z$ that belongs to the subspace 
	\begin{equation*}
		B_0:=\left\{f \in B: \limsup_{\e \to 0}\, [f]_\e = 0 \right\}
	\end{equation*}
	is necessarily constant. One of the interesting features of $B$ is that it contains several functional spaces of interest such as $BV$, $BMO$ and $W^{1/p,p}$ for every $p \in [1,\infty)$.
	The fact that $BV \subset B$ is a direct consequence of a scaling argument and Poincar\'e's inequality for the unit cube, which convey that there exists a dimensional constant $C_n$  such that
	\begin{equation}\label{eq:poincare}
	\fint_{Q} \Big\vert f(x) - \fint_{Q} f\Big\vert \dd x \le C_n \, \frac{|Df|(Q)}{\eps^{n-1}} \qquad \text{for every $\eps$-cube $Q \subset Q_0$,}
	\end{equation}
	and therefore indeed $[f]_\eps \le C_n|Df|(Q_0)$.
	
	Notice that $f \mapsto \|f\|_B$ is anisotropic because it is not invariant under domain rotations. Motivated by this, Ambrosio, Bourgain, Brezis, and Figalli proposed a natural \emph{isotropic} modification of \eqref{eq:def_[f]epsilon} in full space. Namely, they defined the energies
	\begin{equation*}
		\mathsf I_{\e}(f) \coloneqq \sup_{\mathcal F_\e} \e^{n-1} \sum_{Q \in \mathcal F_\e} \fint_{Q} \Big\vert f(x) - \fint_{Q} f\Big\vert \, \dd x, \qquad f \in L^1_\loc(\R^n),
	\end{equation*}
	where $\mathcal F_\e$ denotes a family of cardinality at most $\e^{1-n}$, of disjoint $\eps$-cubes $Q\subset \R^n$ with \emph{arbitrary orientation}. Notice that the restriction on the cardinality of the cubes is tailored to detect only the jump part of the derivative, due to its rectifiability properties. In the search for a $BMO$-type formula for the perimeter, the authors proved that
	\begin{equation}\label{eq:perimeter}
		\lim_{\e\to 0} \mathsf I_\e(\1_E)=\frac12 \min\{1,P(E)\}, \qquad \text{$E \subset \R^n$ measurable,}
	\end{equation}
	where $P(E)$ denotes the perimeter of $E\subseteq \R^n$. This shows that, at least for characteristic functions (or more generally, $\mathbb Z$-valued functions), the isotropic energy $\mathsf I_\eps$ is well-suited to approximate the total variation norm.  
	
	To investigate general $BV$ functions, one needs to remove the cardinality constraint by considering the energy (see \cite[Sect.~4.3]{ABBF})
	\begin{equation*}
		\mathsf K_{\e}(f) \coloneqq \sup_{\mathcal H_\e} \e^{n-1} \sum_{Q \in \mathcal H_\e} \fint_{Q} \Big\vert f(x) - \fint_{Q} f\Big\vert\, \dd x, \qquad f\in L^1_\loc(\R^n),
	\end{equation*}
	where $\mathcal H_\e$ is now a family of disjoint $\eps$-cubes $Q\subset \R^n$ of {arbitrary orientation} and \emph{arbitrary cardinality}.
	These energies were studied by Ponce and Spector~\cite{PS} and by Fusco, Moscariello, and Sbordone~\cite{FMS2}. Both works discuss, among other interesting properties, the following lower and upper bound estimates for the liminf and limsup of the $\Ksf_\e$ energies (see~\cite[Prop.~5.1]{PS} when $f \in BV$, and~\cite[Prop.~2.4]{FMS2} for the general case):  
	\begin{equation}\label{eq:FMS}
		\frac 14 |Df|(\R^n)  \le \liminf_{\eps \to 0}	\mathsf K_{\e}(f) \le \limsup_{\eps \to 0}	\mathsf K_{\e}(f) \le \frac 12 |Df|(\R^n)
	\end{equation}
	for every $f\in L^1_\loc(\R^n)$. Here $|Df|(\R^n)$ denotes the extended total variation, which may attain the value $\infty$.
	It is worthwhile to remark that the lower and the upper bounds are sharp: the lower bound is attained by smooth functions, while the upper bound is attained by characteristic functions of Caccioppoli sets (cf.~\eqref{eq:perimeter}).\footnote{It can actually be proved that the last inequality in \eqref{eq:FMS} is true for every $\e>0$, without taking the limsup. This is a consequence of the fact that in \eqref{eq:poincare} one can take $C_n =\frac12$ for every $n$.
	Indeed, for the case of characteristic functions, see \cite[Appx.]{ABBF}; in the general case, it is possible to show that there is at least one (multiple of a) characteristic function among the maximizers of the functional $f \mapsto \fint_{Q_0} \vert f(x) - \fint_{Q_0} f\vert \, \dd x$ defined on the set $\{|Df|(Q_0)\leq 1\}$.}
			 
	 More generally, De Philippis, Fusco, and Pratelli proved~\cite[Cor.~6.2]{DPFP} (see also~\cite[Thm.~3.3]{FMS}) that, if $f \in SBV_\loc$, then the pointwise limit exists and equals 
	\begin{equation}\label{eq:DFP}
		\lim_{\eps \to 0}	\mathsf K_{\e}(f) = \frac14  |D^a f|(\R^n) + \frac 12 |D^j f|(\R^n).
	\end{equation}
	Here, $D^af$ and $D^jf$ denote the absolutely continuous and the jump part of $Df$. For general $BV$ functions, the  Cantor part $D^cf$ may prevent the existence of a limit (see \cite[Ex.~6.3]{DPFP}). Despite the non-existence of the pointwise limit for general $BV$ functions, Ambrosio et al.~\cite[Sect.~4.3]{ABBF} pondered  the possibility that the $\Gamma$-limit, with respect to the $L^1_{\loc}$ topology, exists nonetheless, and is precisely $\frac14 |Df|(\R^n)$, saturating the lower bound in~\eqref{eq:FMS} for \emph{all} $BV$ functions. 
	Our main result shows that this is indeed the case.
	\begin{theorem}[$\Gamma$-limit]\label{thm:main}
		The $\Gamma$-limit of the functionals $\mathsf K_\eps:L^1_\loc(\R^n)\to [0,\infty]$ with respect to the $L^1_\loc(\R^n)$ topology exists and is given by
		\[
		\Gamma\text{-}\lim \mathsf K_\eps(f) =  \frac 14 |Df|(\R^n),\qquad f\in L^1_\loc(\R^n).
		\]
	\end{theorem}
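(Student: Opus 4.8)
The plan is to establish the two $\Gamma$-convergence inequalities separately. For the \emph{liminf inequality} one must show $\liminf_{\eps\to0}\mathsf K_\eps(f_\eps)\ge\tfrac14|Df|(\R^n)$ whenever $f_\eps\to f$ in $L^1_\loc(\R^n)$; for the \emph{recovery sequence} one must produce, for each $f$, some $f_\eps\to f$ in $L^1_\loc(\R^n)$ with $\limsup_{\eps\to0}\mathsf K_\eps(f_\eps)\le\tfrac14|Df|(\R^n)$. If $|Df|(\R^n)=+\infty$ the recovery inequality is trivial ($f_\eps\equiv f$), so for it we assume $f\in BV(\R^n)$; the liminf argument below applies regardless. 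Everything rests on two elementary facts about $\mathsf K_\eps$. First, a Jensen estimate combined with the translation invariance of the admissible cube families gives that $\mathsf K_\eps$ \emph{decreases under mollification},
\[
\mathsf K_\eps(h*\rho_\delta)\ \le\ \mathsf K_\eps(h)\qquad\text{for every }h\in L^1_\loc(\R^n)\text{ and all }\eps,\delta>0,
\]
because $\fint_Q\big\vert h*\rho_\delta-\fint_Q h*\rho_\delta\big\vert\le\int\rho_\delta(z)\,\fint_{Q-z}\big\vert h-\fint_{Q-z}h\big\vert\,\dd z$ and $\{Q-z\}_Q$ is admissible whenever $\{Q\}_Q$ is. Second, at a \emph{fixed} scale $\delta$ the convergence $f_\eps\to f$ in $L^1_\loc$ upgrades to the locally uniform convergence $\nabla(f_\eps*\rho_\delta)=f_\eps*\nabla\rho_\delta\to f*\nabla\rho_\delta=\nabla(f*\rho_\delta)$. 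Combining the two, $\liminf_\eps\mathsf K_\eps(f_\eps)\ge\liminf_\eps\mathsf K_\eps(f_\eps*\rho_\delta)$, and it suffices to bound the latter below by $\tfrac14|D(f*\rho_\delta)|(\R^n)$ and then let $\delta\to0$, using $|D(f*\rho_\delta)|(\R^n)\to|Df|(\R^n)$.

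For that bound set $g:=f*\rho_\delta$ and $g_\eps:=f_\eps*\rho_\delta$, so $g$ is smooth, $\nabla g\in L^1(\R^n)$, and $\nabla g_\eps\to\nabla g$ locally uniformly. Fix a large ball $\Omega$ and $\eta>0$; partition $\Omega$ by a fine axis-parallel grid of $\rho$-cubes and retain those cubes $P$ with $\sup_P|\nabla g|>2\eta$ and $\mathrm{osc}_P\nabla g<\eta$. On a retained $P$ put $v_P:=\nabla g(\text{center of }P)$ and tile $P$ by $\asymp(\rho/\eps)^n$ disjoint $\eps$-cubes $Q$ with one edge parallel to $v_P$. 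For $\eps$ small $\nabla g_\eps$ stays within $2\eta$ of $v_P$ on $P$, and, since $Q$ is aligned with $v_P$, a first-order Taylor expansion on $Q$ gives
\[
\fint_Q\Big\vert g_\eps-\fint_Q g_\eps\Big\vert\ \ge\ \fint_Q|v_P\cdot(x-x_Q)|\,\dd x\;-\;2\big\|g_\eps-g_\eps(x_Q)-v_P\cdot(\,\cdot-x_Q)\big\|_{L^\infty(Q)}\ \ge\ \tfrac{\eps}{4}|v_P|-2\sqrt n\,\eta\,\eps.
\]
Multiplying by $\eps^{n-1}$ and summing over all retained cubes, the leading terms form a Riemann sum: letting $\eps\to0$ (the cut-off boundary layers of the $P$'s costing only a factor $(1-\eps/\rho)^n\to1$) one gets $\liminf_\eps\mathsf K_\eps(g_\eps)\ge\tfrac14\sum_P\rho^n|v_P|-C_n\,\eta\,|\Omega|$. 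Sending $\rho\to0$, then $\eta\to0$, then $\Omega\uparrow\R^n$ gives $\liminf_\eps\mathsf K_\eps(g_\eps)\ge\tfrac14\|\nabla g\|_{L^1(\R^n)}=\tfrac14|D(f*\rho_\delta)|(\R^n)$. Together with the reduction above this proves the liminf inequality (and, taking $f_\eps\equiv f$, re-proves the pointwise bound $\tfrac14|Df|(\R^n)\le\liminf_\eps\mathsf K_\eps(f)$).

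For the recovery sequence, diagonalize over the mollifications $g_\delta:=f*\rho_\delta$. Each $g_\delta$ lies in $SBV_\loc(\R^n)$ with $D^jg_\delta=0$, so by the De~Philippis--Fusco--Pratelli formula~\eqref{eq:DFP}, $\lim_{\eps\to0}\mathsf K_\eps(g_\delta)=\tfrac14\|\nabla g_\delta\|_{L^1(\R^n)}$; moreover $g_\delta\to f$ in $L^1_\loc$ and $\|\nabla g_\delta\|_{L^1(\R^n)}\to|Df|(\R^n)$ as $\delta\to0$. A standard diagonalization then yields $\delta(\eps)\to0$ with $g_{\delta(\eps)}\to f$ in $L^1_\loc$ and $\limsup_{\eps\to0}\mathsf K_\eps(g_{\delta(\eps)})\le\tfrac14|Df|(\R^n)$, as needed. (Should one wish to avoid~\eqref{eq:DFP}, it is enough to verify directly that $\limsup_\eps\mathsf K_\eps(g)\le\tfrac14|Dg|(\R^n)$ for $g\in C^\infty_c(\R^n)$ — via the Taylor expansion on each $\eps$-cube together with the inequality $\fint_{(-1/2,1/2)^n}|v\cdot y|\,\dd y\le\tfrac14|v|$ — and then to invoke the density in energy of $C^\infty_c$ in $BV$ and the $L^1_\loc$-lower semicontinuity of the $\Gamma$-$\limsup$ functional.)

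I expect the liminf inequality to be the main obstacle. The difficulty is that $f_\eps\to f$ only in $L^1_\loc$, possibly at an arbitrarily slow rate, so the cube averages $\fint_Q f_\eps$ over cubes of the vanishing side $\eps$ cannot be controlled by $\|f_\eps-f\|_{L^1}$, and the cubes that are well adapted to $f$ are useless for $f_\eps$. Mollifying at a \emph{fixed} scale $\delta$ — legitimate precisely because $\mathsf K_\eps$ decreases under mollification — is what breaks the deadlock: it converts the weak $L^1_\loc$ convergence into locally uniform convergence of gradients, which pins down the first-order behaviour of $f_\eps*\rho_\delta$ on every $\eps$-cube, and the information thereby lost is recovered only at the end by letting $\delta\to0$.
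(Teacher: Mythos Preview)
Your proof is correct, but for the $\Gamma$-liminf it follows a genuinely different route from the paper. The paper never mollifies $f_\eps$; instead it replaces $f_\eps$ by the piecewise constant function $\tilde f_\eps$ obtained by averaging over a mesh of cubes of side $\eps/2$, and shows directly that $|D\tilde f_\eps|(\R^n)\le 4n\,\Ksf_\eps(f_\eps)$. In dimension one this already gives the sharp factor $4$; in higher dimension the paper recovers the sharp constant by a Fonseca--M\"uller blow-up, reducing at $|Df|$-a.e.\ point to a one-directional estimate via Lemma~\ref{lemma:monodirectional}. Your argument instead hinges on the convolution monotonicity $\Ksf_\eps(h*\rho_\delta)\le\Ksf_\eps(h)$, which freezes $\delta$ and turns the $L^1_\loc$ convergence $f_\eps\to f$ into locally uniform convergence of gradients; after that, a Riemann-sum/Taylor computation on aligned cubes gives $\frac14$ directly, with no blow-up. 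This is precisely the alternative strategy the paper attributes to Q.~H.~Nguyen in the Addendum. Your approach is shorter and more elementary; the paper's approach, on the other hand, yields the compactness theorem (Proposition~\ref{prop:compactness}) as a structural byproduct of the same piecewise-constant construction, and makes the factor $\frac14=\frac12\cdot\frac12$ visible as coming from a scale-doubling and an overlap count rather than from the linear Poincar\'e constant on the cube. For the $\Gamma$-limsup the two approaches are essentially the same: both reduce to smooth functions via~\eqref{eq:DFP} (equivalently Lemma~\ref{lemma:smooth}) and density in energy.
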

	The choice of the $L^1_\loc$ topology emanates from the following compactness result, which may be of independent interest.
	\begin{theorem}[Compactness]
		Let $\eps_j \to 0$ be an infinitesimal sequence of positive reals and let $(f_{\e_j})_{j} \subset L^1_\loc(\R^n)$ be a sequence such that 
		\[
		\liminf_{j\to\infty} {\mathsf K}_{\e_j}(f_{\e_j}) < \infty.
		\]
		Then there exist a subsequence $(\e_{j_k})_k$, a sequence of constants $(c_k)_k \subset \R$ and a function 
		\[
		f \in L^{1}_\loc(\R^n) \quad \text{with} \quad  |Df|(\R^n) < \infty,
		\]
		such that
		\[
		f_{\e_{j_k}}-c_k \to f \; \quad\text{in $L^1_{\loc}(\R^n)$ as $k \to \infty$}. 
		\]
	\end{theorem}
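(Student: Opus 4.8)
The plan is to localise: it suffices to prove that for every fixed $R>0$ the sequence $(f_{\eps_j})_j$, after subtracting suitable constants, is precompact in $L^1(B_R)$ and bounded in $BV(B_R)$ by a bound independent of $R$; a diagonal extraction over $R\in\mathbb N$ together with a routine reconciliation of the constants will then produce the limit $f$, and lower semicontinuity of the total variation under $L^1_\loc$ convergence will give $|Df|(\R^n)<\infty$. First I pass to a subsequence along which $\mathsf K_{\eps_j}(f_{\eps_j})\le M<\infty$ and $\eps_j\le 1$.

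Fix $R$ and abbreviate $\eps=\eps_j$. I would tile $\R^n$ with the axis-parallel grid $\{Q_k\}_{k\in\Zbb^n}$ of $\eps$-cubes and pass to the piecewise constant approximant $g_j:=\sum_k\big(\fint_{Q_k}f_{\eps_j}\big)\1_{Q_k}$, writing $a_k:=\fint_{Q_k}\big|f_{\eps_j}-\fint_{Q_k}f_{\eps_j}\big|$ for the mean oscillation in $Q_k$. Any finite subfamily of the grid is an admissible competitor in the definition of $\mathsf K_\eps$, so $\eps^{n-1}\sum_{Q_k\cap B_R\ne\emptyset}a_k\le M$, whence $\|f_{\eps_j}-g_j\|_{L^1(B_R)}\le\eps^n\sum_{Q_k\cap B_R\ne\emptyset}a_k\le\eps M\to 0$. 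The crux is a uniform $BV$ bound for $g_j$: its gradient is carried by the grid faces, with $|Dg_j|(F_{kl})=\eps^{n-1}\big|\fint_{Q_k}f_{\eps_j}-\fint_{Q_l}f_{\eps_j}\big|$ on the common face $F_{kl}$ of two adjacent cubes $Q_k,Q_l$. To control the difference of averages I would insert the auxiliary $\eps$-cube $\widehat Q_{kl}$ obtained by shifting $Q_k$ by $\tfrac{\eps}{2}$ towards $Q_l$; it contains one half of $Q_k$ and one half of $Q_l$, so the elementary estimate $\big|\fint_A u-\fint_B u\big|\le\fint_S|u-\fint_A u|+\fint_S|u-\fint_B u|$ applied with $S=Q_k\cap\widehat Q_{kl}$ and then with $S=Q_l\cap\widehat Q_{kl}$ yields $\big|\fint_{Q_k}f_{\eps_j}-\fint_{Q_l}f_{\eps_j}\big|\le 2a_k+2a_l+4\widehat a_{kl}$, with $\widehat a_{kl}$ the mean oscillation of $f_{\eps_j}$ on $\widehat Q_{kl}$. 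Since each cube has at most $2n$ neighbours, $\eps^{n-1}\sum_{k\sim l}(a_k+a_l)\le C_nM$; and since for each fixed coordinate direction the cubes $\widehat Q_{kl}$ attached to faces orthogonal to that direction are pairwise disjoint (so those $n$ families are admissible), $\eps^{n-1}\sum_{k\sim l}\widehat a_{kl}\le nM$. Summing over the faces meeting $B_R$ gives $|Dg_j|(B_R)\le C_nM$, with $C_n$ independent of $j$ and $R$.

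To finish for fixed $R$, set $c_j:=\fint_{B_R}f_{\eps_j}$; then $f_{\eps_j}-c_j$ differs from $g_j-\fint_{B_R}g_j$ by a sequence tending to $0$ in $L^1(B_R)$, and $g_j-\fint_{B_R}g_j$ has zero average on $B_R$ and total variation at most $C_nM$, hence is bounded in $BV(B_R)$ by Poincaré–Wirtinger and thus precompact in $L^1(B_R)$ by Rellich's theorem; so $(f_{\eps_j}-c_j)_j$ is precompact in $L^1(B_R)$. Doing this for $R=1,2,\dots$, extracting a diagonal subsequence, and using that if $f_{\eps_{j_k}}-c_k'\to u$ in $L^1(B_R)$ and $f_{\eps_{j_k}}-c_k''\to v$ in $L^1(B_{R'})$ then the scalars $c_k'-c_k''$ converge and $u,v$ coincide up to an additive constant on the smaller ball — so the local limits glue to a single $f\in L^1_\loc(\R^n)$ after reconciling the constants — I obtain a subsequence, constants $(c_k)_k$ and $f$ with $f_{\eps_{j_k}}-c_k\to f$ in $L^1_\loc(\R^n)$; finally lower semicontinuity of the total variation gives $|Df|(B_R)\le\liminf_k|Dg_{j_k}|(B_R)\le C_nM$ for every $R$, i.e.\ $|Df|(\R^n)\le C_nM<\infty$. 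The step I expect to be delicate is the uniform $BV$ estimate — choosing the straddling cube $\widehat Q_{kl}$ and splitting the faces into disjoint, hence $\mathsf K_\eps$-admissible, subfamilies so that the bound stays independent of $R$ — everything else (the localisation, the Poincaré/Rellich compactness, the diagonal gluing of constants) being standard.
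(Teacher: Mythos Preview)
Your proof is correct and follows essentially the same strategy as the paper's: build a piecewise-constant grid approximation, show it is $O(\eps)$-close in $L^1$ and has total variation bounded by $C_n\,\mathsf K_\eps(f_\eps)$, then conclude via $BV$ compactness. The differences are minor: the paper compares averages of adjacent $\tfrac{\eps}{2}$-cubes through the $\eps$-cube containing both rather than through your half-shifted straddling cube, and it extracts the constants $c_k$ in one stroke from the global Sobolev inequality \cite[Thm.~3.47]{AFP} (applied to the approximants, whose total variation is globally bounded) instead of your localise-plus-diagonal argument.
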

	\begin{remark}
		As we will prove in Proposition \ref{prop:compactness}, the limit function $f$ enjoys better global integrability, namely it belongs to $ L^{1^*}(\R^n)$. Moreover, the translation by constants cannot be relaxed due to the invariance
		\[
		\Ksf_\eps(f + c) = \Ksf_\eps(f) \qquad \text{for all $f \in L^1_\loc(\R^n)$ and $c \in \R$}.
		\]
	\end{remark}
	
	\subsection*{Comments on the proof} Next, we briefly discuss the main arguments of the proof of Theorem~\ref{thm:main}. 
	The limsup inequality is a direct consequence of the density of $C^\infty$ in $L^1_\loc$ and of the equality 
	\begin{equation}\label{eq:cheating}
		\lim_{\eps \to 0}	\mathsf K_{\e}(f) = \frac14  |D f|(\R^n) \quad \text{for all $f \in C^\infty( \R^n)$},
	\end{equation}
	which follows from \eqref{eq:DFP}.
	
	
	The proof of the liminf inequality, instead, is first discussed for functions of one variable, in which case the key argument is relatively easy to absorb and didactic towards the proof of the general case. The overly simplified geometry of the real line allows us to give a direct proof based on the construction of piecewise constant approximations $\tilde{f}_\eps$, equal to the average of $f_\eps$ over intervals induced by $\tfrac{\e}{2}\Zbb$. The crucial argument is that one can control the total variation of the approximations $\tilde{f}_\eps$ by the energy $\mathsf K_{\e}(f_\eps)$. More precisely, we prove by a direct computation (see Lemma~\ref{lemma:1/4|Dw|_leq_K_2epsilon}) the validity of the following inequality:
	\begin{equation}\label{eq:factor4}
		|D \tilde{f}_\e|(\R)\leq 4\K_{\e}(f_\e).
	\end{equation}
	Since the approximations $\tilde{f}_\eps$ still converge to $f$ in $L^1_\loc(\R)$, the lower semicontinuity of the total variation yields the sought lower bound
	\begin{equation}\label{eq:lower_bound_intro}
	\frac{1}{4}|Df|(\R) \le  \liminf_{\eps \to 0} \K_{\e}(f_\e).
	\end{equation}
	
	We note here that our proof conveys, in a rather transparent and clear fashion, the geometric meaning of the prefactor $\frac 14$ (or, equivalently, of the factor $4$ in \eqref{eq:factor4}). It arises from two estimates with factor $2$, which account for two independent aspects. 
	The first factor $2$ comes from a doubling of scale and appears when estimating the difference between the averages of $f_\e$ on two adjacent intervals $I,I'$ of size $\tfrac{\e}{2}$, in terms of the oscillation on $I\cup I'$:
	\begin{equation}\label{eq:diff_average_intro_bis} 
	\Big\vert  \fint_{I} f_\e -  \fint_{I'} f_\e \Big \vert \le 2 \fint_{I \cup I'} \Big\vert f_\e(x) - \fint_{I \cup I'}  f_\e\Big\vert\, \dd x. 
	\end{equation}
	We now notice that, by construction, $D\tilde f_\e$ is purely atomic (it is concentrated on $\frac \eps2 \mathbb Z$) and its total variation can be estimated from above by summing up the right-hand side of \eqref{eq:diff_average_intro_bis} over all intervals $I$ induced by $\frac{\e}{2} \Zbb$.
	This is precisely where the second factor $2$ comes from. When performing the said sum, the intervals $I\cup I'$ have non-trivial overlap, and split naturally into \emph{two} partitions of $\R$ in $\e$-intervals. Since $\Ksf_\e(f_\e)$ is the supremum among all such partitions, we lose here another factor $2$.

	In dimension $n>1$, the same strategy yields
	\begin{equation*}
		|D \tilde{f}_\e|(\R^n)\leq 4n\K_{2\e}(f_\e),
	\end{equation*}
	which is, however, non-optimal.
	To obtain the optimal constant in general dimension, we thus make use of the Fonseca--M\"uller \emph{blow-up strategy}~\cite{FM} (see also \cite{S}, which was the initial inspiration for our approach) and we exploit the infinitesimal \emph{one-directional} rigidity of real-valued $BV$ functions. The heuristic idea can be explained as follows: let us write 
	\[
	\mathbb S^{n-1} \ni  \nu(x) = \frac{\dd Df}{\dd |Df|}(x) \coloneqq \lim_{r \to 0} \frac{Df(B_r(x))}{|Df|(B_r(x))}
	\]
	to denote the polar of $Df$, which is defined $|Df|$-almost everywhere. Then, given a Lebesgue point $x_0$ of the polar, and denoting by $\nu_0\coloneqq \nu(x_0)$ its value, the blow-up method allows us to approximate, at small scales around $x_0$, the measure $|Df|$ with $D_{\nu_0}f\coloneqq \nu_0\cdot Df$. This is the crucial part of this strategy, since by working with the one-directional derivative $D_{\nu_0} f$, we can reduce the problem to the case $n = 1$. 

	\subsection*{Concluding remarks.} Prior to~\cite{BBM}, Bourgain, Brezis, and Mironescu~\cite{BBM2} and D\'avila~\cite{Davila} considered the approximation of the total variation seminorm by \emph{convolution}\emph{-type} functionals $\mathscr F_\e$.
	In principle, such convolution energies and the functionals $\Ksf_\e$ are comparable up to a dimensional constant (see, e.g.,~\cite[Prop.~5.2]{PS}), but the fine convergence properties in these two settings differ significantly. For instance, the pointwise limit $\lim_{\e \to 0} \mathscr F_\e(u)$ exists for all $u \in L^1$, while $\lim_{\e \to 0} \Ksf_\e(u)$ might fail to exist for $u \in L^1$ (cf.~\cite[Ex.~6.3]{DPFP}). Very general $\Gamma$-convergence results have already been established for convolution-type functionals (see, e.g., \cite{P, N}). However, to the best of our knowledge, our result is the first that is concerned with a \emph{discrete-type} energy similar to the one considered in \cite{BBM}.
	
	\subsection*{Addendum}
	After the first pre-print of this work appeared, Ha\"im Brezis shared with us an unpublished work by Nguyen~\cite{Hung}, which contains an alternative approach to the lower bound of the $\Gamma$-limit. Nguyen's argument is based on a simple convolution estimate of the form
	\[
	\Ksf_\e(f \ast \rho_\delta) \le \Ksf_\e(f), \qquad f \in L^1_\loc(\R^n)
	\]
	where $(\rho_\delta)_\delta$ is a family of convolution kernels, and on the validity of the identity $\frac 14 |D(f \ast \rho_\delta)| = \lim_{\e \to 0} \Ksf_\e(f \ast \rho_\delta)$, which follows from~\eqref{eq:DFP} and from the smoothness of $f \ast \rho_\delta$ for $\delta>0$. On the other hand, our approach, based on piecewise constant approximations rather than convolutions, serves as a building block for both the lower bound and the compactness theorem. It also gives a different account of the appearance of the prefactor $\frac14$, which was only known to be related to the optimal Poincar\'e constant for linear maps on the unit cube (cf. \cite[Lemma 3.1]{FMS}). In particular, we remark that our proof yields an alternative explanation of the pointwise lower bound 
	\[
		\frac 14 |Df| \le \liminf_{\e \to 0} \Ksf_\e(f),
	\]
	by considering the constant family $f_\e = f$ in the $\Gamma$-liminf inequality.
		
	\subsection*{Acknowledgements} A.A.-R was supported by the Fonds de la Recherche Scientifique - FNRS under Grant No 40005112 (Compensated compactness and fine properties of PDE-constrained measures). P.B. and G.D.N. have received funding from the European Research Council (ERC) under the European Union's Horizon 2020 research and innovation programme, grant agreement No 757254 (Singularity).

	\section{Notation and preliminaries}
	
	We assume that $n \ge 1$ is an integer throughout the paper. If $A$ is a set, we denote by $A^c$ its complement and by $\1_A$ its characteristic function. We write $B_r(x)$ to denote the open ball of radius $r>0$ centered at a point $x \in \R^n$, and $D_r$ to denote the cube $(-r,r)^n$.
	To denote the $n$-dimensional Lebesgue (outer) measure of a set $A\subset \R^n$, we write $\Lcal^n$  or, when no risk of confusion arises, simply $|A|$. As usual, we write $\Hcal^{n-1}$ to denote the Hausdorff $(n-1)$-dimensional measure. 
	The standard Lebesgue spaces are defined in the usual way, and they are denoted by $L^p(\R^n)$, omitting the domain when there is no risk of confusion. The same holds for their local counterpart, i.e., we write $f \in L^p_{\loc}(\R^n)$ if $f \1_K \in L^p(\R^n)$ for every compact $K \subset \R^n$.
	The same carries over to the Lebesgue spaces associated with any non-negative Borel measure $\mu$, which will be written as $L^p(\R^n,\mu)$. 
	Finally, we write $\mathbb S^{n-1}$ to denote the unit sphere in $\R^n$. 
	
	\subsection{\texorpdfstring{$BV$}{BV} functions}
	Given a function $f\in L^1_\loc(\R^n)$, we denote by $|Df|(\R^n)$ its extended total variation defined by
	\[
	|Df|(\R^n)\coloneqq \sup\left\{\int_{\R^n} f(x)\,\mathrm{div}\phi(x)\,\dd x:\, \phi\in C^1_c(\R^n,\R^n), \,\sup_{x \in \R^n} |\phi(x)| \leq 1\right\},
	\]
	which may attain the value $\infty$. By construction $f \mapsto |Df|(\R^n)$ is lower semicontinuous with respect to the $L^1_\loc$ topology. By Riesz's theorem, if $|Df|(\R^n)<\infty$ then the distributional derivative $Df = (D_1f, \ldots , D_nf)$ of $f$ is a vector-valued measure with finite total variation. We say that $f$ has bounded variation, in symbols $f \in BV$, if $f \in L^1$ and $|Df|(\R^n)<\infty$. Appealing to the Radon-Nikodym's theorem, we shall often write $Df = \nu |Df|$ to denote the polar decomposition of $Df$, where 
	\begin{equation}\label{eq:def_polar}
		\nu(x) := \frac{\dd Df}{\dd |Df|}(x) =\lim_{r\to 0}\frac{Df(B_r(x))}{|Df|(B_r(x))}
	\end{equation}
	is well-defined and of unit length at $|Df|$-a.e. $x \in \R^n$.
	Given a direction $e \in \mathbb S^{n-1}$, we define the partial directional derivative in direction $e$ of $f$ as the measure $D_e f := (\nu \cdot e) |Df|$.
	
	Let $f \in L^1_\loc(\R^n)$ be such that $|Df|(\R^n)< \infty$. We introduce the set $S \subset \R^n$ made up of points $x_0 \in \R^n$ satisfying the following two conditions:  
	\begin{enumerate}[(a)]
		\item the limit in \eqref{eq:def_polar} exists at $x_0$ and $\nu_0 := \nu(x_0)$ has unit norm, i.e. $|\nu_0|=1$; 
		\item $x_0$ is a Lebesgue point of the map $\nu \in L^\infty(\R^n,|Df|)$, that is 
		\[
		\lim_{r\to 0}\fint_{B_r(x_0)} |\nu(x_0)-\nu(x)|\,\dd|Df|(x)=0.
		\]
	\end{enumerate}
	Radon-Nikodym's theorem and Lebesgue's differentiation theorem (see \cite[Cor. 2.33]{AFP}) convey that $S$ has full $|Df|$-measure. 
	
	For future use, we record here a well-known \emph{mono-directionality property} satisfied by functions with bounded variation. More precisely, the following Lemma shows that, infinitesimally, at every point of $S$ (in particular, at $|Df|$-a.e. point), the function $f$ oscillates with respect to only one direction. 
	
	\begin{lemma}[Mono-directionality of $BV$ functions]\label{lemma:monodirectional} 
		For $|Df|$-a.e. $x_0\in\R^n$, it holds 
		\begin{equation}\label{eq:directional_derivative}
			\lim_{r\to 0}\frac{D_{\nu_0}f (B_r(x_0))}{|Df|(B_r(x_0))}=1. 
		\end{equation}
	\end{lemma}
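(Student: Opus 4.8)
The plan is to deduce the mono-directionality statement \eqref{eq:directional_derivative} from the two defining properties (a) and (b) of the set $S$, which we already know has full $|Df|$-measure. Fix $x_0\in S$ and write $\nu_0=\nu(x_0)$. Since $D_{\nu_0}f=(\nu\cdot\nu_0)|Df|$ by the very definition of the partial directional derivative, we have the identity
\[
D_{\nu_0}f(B_r(x_0))=\int_{B_r(x_0)}\nu(x)\cdot\nu_0\,\dd|Df|(x).
\]
Hence
\[
|Df|(B_r(x_0))-D_{\nu_0}f(B_r(x_0))=\int_{B_r(x_0)}\bigl(1-\nu(x)\cdot\nu_0\bigr)\,\dd|Df|(x).
\]
The integrand is nonnegative because $\nu$ and $\nu_0$ are unit vectors, so $1-\nu(x)\cdot\nu_0=\tfrac12|\nu(x)-\nu_0|^2\le|\nu(x)-\nu_0|$. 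Therefore
\[
0\le 1-\frac{D_{\nu_0}f(B_r(x_0))}{|Df|(B_r(x_0))}\le \fint_{B_r(x_0)}|\nu(x)-\nu_0|\,\dd|Df|(x),
\]
and the right-hand side tends to $0$ as $r\to0$ precisely by the Lebesgue-point condition (b). This gives \eqref{eq:directional_derivative} at every $x_0\in S$, hence for $|Df|$-a.e.\ $x_0\in\R^n$, which is what we want.

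One preliminary point to address is that the quotient in \eqref{eq:directional_derivative} is well-defined for small $r$: this requires $|Df|(B_r(x_0))>0$ for all sufficiently small $r>0$. But at a point $x_0$ where the Radon--Nikodym limit \eqref{eq:def_polar} exists and equals a unit vector (condition (a)), one cannot have $|Df|(B_r(x_0))=0$ for arbitrarily small $r$, since then the difference quotient defining $\nu(x_0)$ would be $0/0$ along a sequence and could not converge to something of unit norm; more directly, $x_0$ lies in the support of $|Df|$ up to a $|Df|$-null set, and in any case the estimate above only needs to be read on the sequence of radii for which $|Df|(B_r(x_0))>0$. I would include a one-line remark on this.

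I do not expect a genuine obstacle here: the statement is essentially a packaging of Lebesgue differentiation for the polar vector $\nu\in L^\infty(\R^n,|Df|)$ together with the elementary inequality $1-a\cdot b\le|a-b|$ for unit vectors $a,b$. The only mild subtlety is the bookkeeping that $D_{\nu_0}f$ in the numerator is the scalar measure $(\nu\cdot\nu_0)|Df|$ and not a vector, so that the ratio makes sense as a ratio of (signed) scalars; once that is set up, the computation above is immediate.
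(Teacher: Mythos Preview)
Your argument is correct and follows essentially the same route as the paper: both proofs work at every point of $S$ and bound $\bigl|1-\tfrac{D_{\nu_0}f(B_r)}{|Df|(B_r)}\bigr|$ by the Lebesgue average $\fint_{B_r}|\nu-\nu_0|\,\dd|Df|$, which tends to zero by condition~(b). The only cosmetic difference is that you use the pointwise inequality $1-\nu(x)\cdot\nu_0=\tfrac12|\nu(x)-\nu_0|^2\le|\nu(x)-\nu_0|$ on the integrand, whereas the paper first applies Jensen's inequality to pass to $\bigl|\nu_0-\tfrac{Df(B_r)}{|Df|(B_r)}\bigr|$ and then projects onto $\nu_0$; your version is marginally more direct.
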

	
	\begin{proof} We show that \eqref{eq:directional_derivative} is satisfied at every point $x_0$ that belongs to the set $S$ defined above. Indeed,
		\begin{align*}
			\fint_{B_r(x_0)} |\nu_0-\nu(x)|\,\dd|Df|(x)&\geq \bigg|\fint_{B_r(x_0)} (\nu_0-\nu(x))\,\dd|Df|(x)\bigg|\\
			&=\frac{1}{|Df|(B_r(x_0))}\Big|\nu_0 |Df|(B_r(x_0))-Df(B_r(x_0))\Big|\\
			&\geq \frac{1}{|Df|(B_r(x_0))}\Big||Df|(B_r(x_0))-\nu_0\cdot Df(B_r(x_0))\Big|\\
			&=\left|1-\frac{D_{\nu_0}f(B_r(x_0))}{|Df|(B_r(x_0))}\right|,
		\end{align*}
		and, since $x_0 \in S$, the left-hand side goes to zero as $r\to 0$.
	\end{proof} 
	
	\begin{remark}[Lebesgue's differentiation theorem with cubes]\label{rem:general_lebesgue} 
		In the following, we will need to work with a family of possibly rotated cubes $Q_r(x_0)$ centered at $x_0$ and of side-length $2r$. Since Lebesgue's differentiation theorem holds (for general Borel measures) for more general families of sets (see~{\cite[Thm. 3.2]{F}}), there is no loss of generality in assuming that \eqref{eq:directional_derivative} holds at $|Df|$-a.e. $x_0\in \R^n$, if we replace $B_r(x_0)$ with $Q_r(x_0)$.
	\end{remark}

	\subsection{On \texorpdfstring{$\Gamma$}{Gamma}-convergence and structure of the paper}
	According to the definition of $\Gamma$-convergence (see, e.g., the monograph \cite{DM}), to prove Theorem~\ref{thm:main} we will need to  verify the following $\Gamma$-limsup and $\Gamma$-liminf inequalities: 
	
	\begin{enumerate}[(i)]
		\item {\bf $\Gamma$-liminf}: for every family $(f_\eps)_{\eps>0} \subset L_\loc^1(\R^n)$ such that $f_\e \to f$ in $L^1_\loc$ as $\eps \to 0$ it holds 
		\begin{equation*}
			\frac{1}{4} |Df|(\R^n) \le \liminf_{\eps \to 0} \mathsf K_{\e} (f_\e).
		\end{equation*}
		\item {\bf $\Gamma$-limsup}: for every $f \in L_\loc^1(\R^n)$, there exists a family $(f_\eps)_{\eps>0} \subset L_\loc^1(\R^n)$ such that $f_\e \to f$ in $L^1_\loc$ as $\eps \to 0$ and 
		\begin{equation*}
			\limsup_{\eps \to 0} \mathsf K_{\e} (f_\e) \le \frac{1}{4} |Df|(\R^n). 
		\end{equation*}
	\end{enumerate}
	The $\Gamma$-liminf inequality is rather delicate, and its proof is split across multiple sections of the paper. In Section~\ref{sec:lower_bound_one} we address the proof of the main result for functions of the real line ($n=1$), and in Section~\ref{sec:lower_bound_general_case} we tackle the general case for functions of several variables by refining the proof of the main compactness result (which will be established in Section~\ref{sec:compactness}). Finally, Section~\ref{sec:upper_bound} contains the simple proof of the $\Gamma$-limsup inequality. 
	
	\section{The lower bound in dimension one}\label{sec:lower_bound_one}
	
	In this section, we give an elementary argument for the lower bound in dimension one ($n = 1$).
	\begin{proposition}\label{prop:lower_bound_dimension_1}
		Let $(f_\eps)_{\eps>0} \subset L_\loc^1(\R)$ be such that $f_\e \to f$ in $L^1_\loc(\R)$ as $\eps \to 0$. Then 
		\[
		\frac14 |Df|(\R)\leq \liminf_{\e\to 0}\mathsf K_{\e}(f_\e).
		\]
	\end{proposition}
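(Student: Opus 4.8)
The strategy is to reduce the estimate to the lower semicontinuity of the total variation by constructing explicit piecewise constant approximations. Fix the infinitesimal scale $\eps>0$ and consider the partition of $\R$ into half-intervals $I_k = [k\tfrac\eps2, (k+1)\tfrac\eps2)$ induced by the lattice $\tfrac\eps2\Z$. Define $\tilde f_\eps$ to be the function that is constant on each $I_k$, equal to the average $\fint_{I_k} f_\eps$. The first key step is to show that $\tilde f_\eps \to f$ in $L^1_\loc(\R)$: this follows from the $L^1_\loc$ convergence $f_\eps \to f$ together with the fact that averaging over a partition into intervals of vanishing size is an $L^1$-contraction (on any fixed bounded interval) that converges strongly to the identity, so one splits $\|\tilde f_\eps - f\|_{L^1(K)} \le \|\tilde f_\eps - \tilde f\|_{L^1(K')} + \|\tilde f - f\|_{L^1(K)}$ where $\tilde f$ is the analogous averaging of the fixed limit $f$, both of which vanish in the limit.

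\textbf{The main estimate.} The heart of the argument is the inequality $|D\tilde f_\eps|(\R) \le 4\,\mathsf K_\eps(f_\eps)$, which is the one-dimensional content of \eqref{eq:factor4} (to be proved in Lemma~\ref{lemma:1/4|Dw|_leq_K_2epsilon}). Since $\tilde f_\eps$ is piecewise constant with jumps only at the points of $\tfrac\eps2\Z$, its total variation is $\sum_k |\fint_{I_{k}} f_\eps - \fint_{I_{k-1}} f_\eps|$. For two adjacent half-intervals $I, I'$ of length $\tfrac\eps2$, whose union $J = I\cup I'$ is an $\eps$-interval, one has the elementary bound
\[
\Big| \fint_{I} f_\eps - \fint_{I'} f_\eps \Big| \le 2 \fint_{J} \Big| f_\eps(x) - \fint_{J} f_\eps \Big|\, \dd x,
\]
obtained by writing $\fint_I f_\eps - \fint_{I'} f_\eps = (\fint_I f_\eps - \fint_J f_\eps) - (\fint_{I'} f_\eps - \fint_J f_\eps)$, estimating each difference by $\fint_I |f_\eps - \fint_J f_\eps|$ (respectively with $I'$), and using $|I| = |I'| = \tfrac12|J|$. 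Summing over $k$, the $\eps$-intervals $I_{k-1}\cup I_k$ split into exactly two families of pairwise disjoint $\eps$-intervals (those with $k$ even and those with $k$ odd), each family forming an admissible competitor $\mathcal H_\eps$ of arbitrary cardinality in the definition of $\mathsf K_\eps(f_\eps)$. Hence the total sum is bounded by $2 \cdot 2\,\mathsf K_\eps(f_\eps) = 4\,\mathsf K_\eps(f_\eps)$, where the outer factor $2$ comes from the two partitions and the inner factor $2$ from the displayed oscillation inequality.

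\textbf{Conclusion and the delicate point.} Combining the two steps: pick a sequence $\eps_j \to 0$ realizing $\liminf_{\eps\to 0}\mathsf K_\eps(f_\eps)$; along it $\tilde f_{\eps_j} \to f$ in $L^1_\loc(\R)$, so by lower semicontinuity of the total variation with respect to $L^1_\loc$ convergence,
\[
|Df|(\R) \le \liminf_{j\to\infty} |D\tilde f_{\eps_j}|(\R) \le \liminf_{j\to\infty} 4\,\mathsf K_{\eps_j}(f_{\eps_j}) = 4\liminf_{\eps\to 0}\mathsf K_\eps(f_\eps),
\]
which is the claim after dividing by $4$. The step I expect to require the most care is the $L^1_\loc$ convergence $\tilde f_\eps \to f$: one must handle the interplay between the moving partition (depending on $\eps$) and the moving function $f_\eps$, and in particular ensure that boundary effects near $\partial K$ — where an $\eps$-interval may stick out of the compact set $K$ — do not spoil the estimate. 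This is resolved by enlarging $K$ slightly to a $K'$ containing all half-intervals that meet $K$, and noting $|K'\setminus K|\to 0$; if $f_\eps$ is not a priori uniformly integrable one first subtracts a large-value truncation or simply invokes that $f_\eps \to f$ in $L^1(K')$ gives equi-integrability of the tails on $K'$ for $\eps$ small. The other steps are the elementary computations indicated above.
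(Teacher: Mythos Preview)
Your proposal is correct and follows essentially the same route as the paper: construct the piecewise constant approximation $\tilde f_\eps = f_\eps^{\eps/2}$ on the lattice $\tfrac{\eps}{2}\Z$, prove $\tilde f_\eps \to f$ in $L^1_\loc$ via the contraction property of averaging plus the splitting $\|\tilde f_\eps - f\| \le \|(f_\eps - f)^{\eps/2}\| + \|f^{\eps/2} - f\|$, establish $|D\tilde f_\eps|(\R) \le 4\,\mathsf K_\eps(f_\eps)$ by the oscillation bound and the even/odd partition trick, and conclude by lower semicontinuity. Your worry about boundary effects and equi-integrability is unnecessary: the paper's Lemma~\ref{lemma:1/4|Dw|_leq_K_2epsilon}(i) (which you implicitly invoke as the ``$L^1$-contraction'' property) gives $\|(f_\eps - f)^{\eps/2}\|_{L^1([a,b])} \le \|f_\eps - f\|_{L^1([a-1,b+1])}$ directly, and the latter vanishes by the assumed $L^1_\loc$ convergence, so no truncation argument is needed.
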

	
	Before embarking on the details of the proof itself, let us briefly comment on the main strategy. The heart of the argument will be to replace the family $(f_\e)_\e$ with another family $(\tilde f_\e)_\e$ that is piecewise constant on intervals of length $\e/2$ (equal to the average of $f_\e$ over that interval), which still converges to $f$ in $L^1_\loc$. Since $\tilde f_\e$ is piecewise constant, its derivative is purely atomic, and we exploit this simple structure to prove that
	\begin{equation}\label{eq:step2}
		|D\tilde f_\e|(\R) \le 4\Ksf_{\e}(f_\e).
	\end{equation} 
	The conclusion then follows from the lower semicontinuity of the total variation with respect to the convergence $\tilde f_\e \to f$ in $L^1_\loc$.
	
	To make this more rigorous, let us introduce the following notation: for a given $\delta>0$, we write 
	\[
	\I^\delta:=\left\{I^\delta_j : j \in \mathbb Z \right\},
	\]
	to denote the family of disjoint open sub-intervals 
	\[
	I^\delta_j:=((j - 1)\delta,j\delta), 
	\]
	induced by the lattice $\delta \mathbb Z$. For any $w \in L_\loc^1(\R)$, we may define a $\delta$-scale piecewise approximation by setting
	\begin{equation}\label{eq:def_w_delta}
		w^\delta \coloneqq \sum_{I \in \mathcal I^\delta} \1_{I} w_{I},  \qquad w_{I} \coloneqq  \fint_{I} w(x)\,\dd x.
	\end{equation}
	Roughly speaking, the linear operator $w \mapsto w^\delta$ could be considered a discrete mollification. It features some of the usual properties of mollification, as it does not increase (locally) the $L^1$-norm.
	We collect these properties in the following lemma:
	
	\begin{lemma}\label{lemma:1/4|Dw|_leq_K_2epsilon}
		Let $w \in L_\loc^1(\R)$ and let $\delta > 0$. Then:
		\begin{enumerate}
			\item[\rm (i)] for each $\delta >0$ and every bounded interval $[a,b] \subset \R$, it holds
			\[
			\Vert w^\delta \Vert_{L^1([a,b])}\le \|w\|_{L^1([a-\delta,b+\delta])};
			\] 
			\item[\rm (ii)] $w^\delta \to w$ in $L^1_\loc(\R)$ as $\delta \to 0$;
			\item[\rm (iii)] for each $\delta >0$, it holds
			\[
			\frac 14 |D w^\delta|(\R) \le \Ksf_{2\delta}(w). 
			\]
		\end{enumerate}
	\end{lemma}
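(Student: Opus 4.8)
The plan is to treat the three items in order, since (ii) follows from (i) by a standard density argument and (iii) is the crux. For item (i), I would write $w^\delta = \sum_j \1_{I^\delta_j} w_{I^\delta_j}$ and observe that on any interval $I^\delta_j$ we have, by Jensen's inequality, $\int_{I^\delta_j} |w^\delta| = |I^\delta_j|\, |w_{I^\delta_j}| = \big|\int_{I^\delta_j} w\big| \le \int_{I^\delta_j} |w|$. Summing only over those $j$ with $I^\delta_j \cap [a,b] \neq \emptyset$ — all of which are contained in $[a-\delta, b+\delta]$ — gives $\|w^\delta\|_{L^1([a,b])} \le \sum_{j : I^\delta_j \cap [a,b] \neq \emptyset} \int_{I^\delta_j} |w| \le \|w\|_{L^1([a-\delta,b+\delta])}$. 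For item (ii), I would first note it is immediate for $w \in C_c(\R)$ by uniform continuity (the average on an interval of length $\delta$ converges uniformly to the value), then extend to general $w \in L^1_\loc$ by density in $L^1$ on bounded sets, using the uniform bound from (i) to control the approximation: given a bounded set and $\eta>0$, pick $g \in C_c$ with $\|w-g\|_{L^1}$ small on a slightly larger set, write $w^\delta - w = (w^\delta - g^\delta) + (g^\delta - g) + (g - w)$, and bound the first term by (i) applied to $w - g$.

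The main work is item (iii). By definition $w^\delta$ is constant on each $I^\delta_j$, so $Dw^\delta$ is purely atomic, concentrated on the lattice $\delta\Zbb$, with $|Dw^\delta|(\R) = \sum_{j \in \Zbb} |w_{I^\delta_{j+1}} - w_{I^\delta_j}|$ (the jump at the point $j\delta$). The key inequality to establish is the one advertised in the introduction, namely that for two adjacent intervals $I, I'$ of length $\delta$,
\[
\big| w_I - w_{I'} \big| \le 2 \fint_{I \cup I'} \Big| w(x) - \fint_{I\cup I'} w \Big|\, \dd x,
\]
where $I \cup I'$ is an interval of length $2\delta$. This follows by writing, with $m := \fint_{I\cup I'} w$,
\[
|w_I - w_{I'}| = \Big| \fint_I (w - m) - \fint_{I'} (w - m) \Big| \le \fint_I |w - m| + \fint_{I'} |w - m| = \frac{2}{|I\cup I'|}\int_{I\cup I'} |w - m|,
\]
using $|I| = |I'| = \tfrac12 |I \cup I'|$. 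Now I would split the sum $\sum_{j} |w_{I^\delta_{j+1}} - w_{I^\delta_j}|$ according to the parity of $j$: the pairs $(I^\delta_{2k}, I^\delta_{2k+1})$ form one partition of $\R$ into disjoint $2\delta$-cubes, and the pairs $(I^\delta_{2k+1}, I^\delta_{2k+2})$ form a second such partition. Applying the displayed inequality term by term, the sum over the even-parity family is bounded by $2 \sum_k \fint_{Q_k} |w - w_{Q_k}|$ over that partition $\{Q_k\}$ of $\R$ into $2\delta$-intervals, which — multiplying by $(2\delta)^{1-1} = 1$ — is at most $2\,\Ksf_{2\delta}(w)$ by the very definition of $\Ksf_{2\delta}$ as a supremum over families of disjoint $2\delta$-cubes of arbitrary cardinality; similarly for the odd-parity family. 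Adding the two contributions gives $|Dw^\delta|(\R) \le 4\,\Ksf_{2\delta}(w)$, as claimed.

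I expect the only real subtlety to be bookkeeping at infinity: the partitions $\{Q_k\}$ are countably infinite, but since $\Ksf_{2\delta}$ allows arbitrary cardinality this is harmless, and if $|Dw^\delta|(\R) = \infty$ then necessarily $\Ksf_{2\delta}(w) = \infty$ too (take finite truncations of the partition and let the number of cubes grow), so the inequality holds trivially. A second minor point is ensuring the two parity families genuinely partition $\R$ up to the lattice points $\delta\Zbb$, which have measure zero and do not affect any of the integrals. Everything else is the elementary computation above.
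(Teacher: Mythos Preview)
Your proposal is correct and follows essentially the same approach as the paper's proof: the same Jensen-type bound for (i), a density argument for (ii) (the paper uses $L^\infty$ rather than $C_c$ as the dense class, via Lebesgue differentiation and dominated convergence, but the structure is identical), and for (iii) the same key inequality $|w_I - w_{I'}| \le 2\fint_{I\cup I'}|w - \fint_{I\cup I'} w|$ followed by the even/odd parity splitting into two partitions of $2\delta$-intervals. The only cosmetic difference is that the paper names the two partitions $\P_1,\P_2$ explicitly, but the computation and the origin of the factor $4 = 2\cdot 2$ are exactly as you describe.
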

	
	\begin{proof}
		{\rm (i)} For every interval $I\in\I_{\delta}$ it  holds
		\[
		\int_I | w^{\delta}(x)|\, \dd x\leq \int_I \left(\frac{1}{|I|}\int_I |w(y)|\, \dd y\right) \dd x=\|w\|_{L^1(I)}.
		\]
		Now we consider the smallest set $U$ that is made up of unions of intervals in $\mathcal{I}_\delta$ and that contains $[a,b]$ up to an $\mathcal{L}^1$-negligible set. With this consideration in mind, we deduce that
		\begin{align*}
			\Vert w^\delta \Vert_{L^1([a,b])}& \leq\Vert w^\delta \Vert_{L^1(U)} \\
			& =\sum_{\substack{I\in \mathcal{I}_\delta\\I\subseteq U}} \int_I| w^{\delta}(x)|\, \dd x\leq \|w\|_{L^1(U)}\leq \|w\|_{L^1([a-\delta,b+\delta])}.
		\end{align*}
		
		Next, we prove {\rm (ii)}. Let us fix a bounded interval $[a,b]$. First we assume that $w$ is in $L^\infty ([a-\delta,b+\delta])$. By the uncentered version of Lebesgue's differentiation theorem (\cite[Thm.~3.2]{F}), $w^\delta\to w$ pointwise a.e. in $[a,b]$, and by the assumption $w^\delta$ is uniformly bounded in $L^\infty([a,b])$. By dominated convergence $w^\delta\to w$ in $L^1([a,b])$.
		To address the general case we argue as follows: for any given $\eta>0$, we can find $v\in L^\infty([a-1,b+1])$ such that $\|v-w\|_{L^1([a-1,b+1])}<\eta$. Then, the previous step gives
		\[
		\|v-v^\delta\|_{L^1([a,b])}\to 0\quad\text{as $\delta\to 0$}.
		\]
		Moreover, by point {\rm (i)} applied to $w-v$, we have that 
		\[
		\|w^\delta-v^\delta\|_{L^1([a,b])}\leq \|w-v\|_{L^1([a-\delta,b+\delta])}<\eta.
		\]
		Hence, by the triangle inequality we deduce the approximation bound
		\begin{align*}
			\limsup_{\delta\to 0}\|w-w^\delta\|_{L^1([a,b])} & \leq \limsup_{\delta\to 0} \big(\|w-v\|_{L^1([a,b])} \\
			& \qquad +\|v-v^\delta\|_{L^1([a,b])}+\|v^\delta-w^\delta\|_{L^1([a,b])}\big)\\
			& <2\eta.
		\end{align*}
		Since this holds for every $\eta>0$ and every bounded interval $[a,b]$, we have shown that $w^\delta\to w$ in $L^1_\loc$.
		
		It remains to show {\rm (iii)}. For any two neighboring intervals $I,I'\in \I^\delta$ and $m \in \R$, we may estimate the difference of the averages $|w_I-w_{I'}|$ as
		\[
		\left|\fint_I w(x)\,\dd x-\fint_{I'}w(x)\,\dd x\right| \leq\fint_I|w(x)-m|\,\dd x+\fint_{I'}|w(x)-m|\,\dd x.
		\]
		In particular, choosing $m\coloneqq\fint_{I\cup I'}w$, we obtain the estimate
		\[
		|w_I-w_{I'}|\leq 2 \fint_{I\cup I'} \left|w(x)-\fint_{I\cup I'}w\right|\,\dd x.
		\]
		Since $Dw^\delta$ is purely atomic, we further get
		\begin{align*}
			|Dw^\delta|(\R)&=\sum_{j \in \mathbb Z}  |w_{I^\delta_j}-w_{I^\delta_{j+1}}| \\
			& \leq \sum_{j \in \mathbb Z}   2\fint_{I_j^\delta\cup I^\delta_{j+1}}\Big|w(x)-\fint_{I^\delta_j\cup I^\delta_{j+1}}w\Big|\, \dd x\\
			& =2 \bigg(\sum_{I\in \P_1}\fint_I \Big|w(x)-\fint_I w\Big|\, \dd x+\sum_{I\in \P_2}\fint_I \Big|w(x)-\fint_I w\Big|\, \dd x\bigg),
		\end{align*}
		where we denoted by $\P_1$ and $\P_2$ the collection of intervals of length $2\delta$, obtained by considering only $I^\delta_j\cup I^\delta_{j+1}$ for  $j$ even and $j$ odd, respectively. From this computation we conclude that
		\begin{equation*}
			|D w^\delta|(\R)\leq 4 \sup_{\H_{2\delta}}\sum_{I\in\H_{2\delta}}\fint_I\Big| w(x)-\fint_I w\Big|\, \dd x=4\K_{2\delta}(w),
		\end{equation*}
		as desired.
	\end{proof}
	
	We can now address the proof of the lower bound inequality for functions on the real line.
	\begin{proof}[Proof of Proposition~\ref{prop:lower_bound_dimension_1}] 
		Recalling the notation introduced in \eqref{eq:def_w_delta}, we set for $\e>0$, 
		\[
		\tilde{f}_\e := f_\e^{\e/2}
		\]
		and we prove that
		\begin{equation}\label{eq:stable}
			\tilde{f}_\e \to f \; \text{in $L^1_\loc(\R)$}.
		\end{equation}
		By triangle inequality and by Lemma \ref{lemma:1/4|Dw|_leq_K_2epsilon} {\rm (i)}, we have for every bounded interval $[a,b] \subset \R$
		\begin{align*}
			\Vert \tilde{f}_\e - f \Vert_{L^1([a,b])} & \le \Vert f^{\e/2}_\e - f^{\e/2} \Vert_{L^1([a,b])} + \Vert  f^{\e/2} -f \Vert_{L^1([a,b])} \\
			& \le \Vert f_\e - f \Vert_{L^1([a-1,b+1])} + \Vert  f^{\e/2} -f \Vert_{L^1([a,b])}
		\end{align*}
		for $\e>0$ sufficiently small. Since $f_\e \to f$ in $L^1_\loc(\R)$ the first term in the right-hand side goes to $0$. For the second term, Lemma \ref{lemma:1/4|Dw|_leq_K_2epsilon} {\rm (ii)}, yields $f^{\e/2} \to f$ in $L^1_\loc(\R)$ as $\e \to 0$ and we thus conclude \eqref{eq:stable}. 
		
		Having established this convergence, we can now prove the $\Gamma$-liminf inequality. Indeed, Lemma \ref{lemma:1/4|Dw|_leq_K_2epsilon} {\rm (iii)} with $w := f_\e$ and $\delta :=\frac{\e}{2}$ gives 
		\[
		\frac 14 |D \tilde{f}_\eps|(\R)  \le \Ksf_{\e}(f_\e).
		\]
		Combining this with \eqref{eq:stable} (and the lower semicontinuity of the total variation with respect to $L^1_\loc$-convergence), we obtain
		\[
		\frac14|Df|(\R)\leq \liminf_{\e \to 0}\frac14|D\tilde{f}_\e|(\R)\leq \liminf_{\e \to 0} \mathsf K_{\e}(f_\e),
		\]
		which is the sought inequality.
	\end{proof}

	\section{Compactness in general dimension}\label{sec:compactness}
	We now consider the higher dimensional case and study the problem when $n>1$. We introduce a construction similar to the one of the previous section by defining an approximation that is piecewise constant on \emph{cubes} of side $\e$. This, however, \emph{will not} give the sharp inequality for the lower bound of the $\Gamma$-limit. Nonetheless, it will be enough to obtain the following compactness result:
	
	\begin{proposition}[Compactness]\label{prop:compactness} 
		Let $\eps_j \to 0$ be an infinitesimal sequence of positive reals and let $(f_{\e_j})_{j} \subset L^1_\loc(\R^n)$ be a sequence such that 
		\[
		\liminf_{j\to\infty} {\mathsf K}_{\e_j}(f_{\e_j}) < \infty.
		\]
		Then there exist a subsequence $(\eps_{j_k})_k$, a sequence of constants $(c_k)_k \subset \R$ and a function 
		\[
		f \in L^{1}_\loc(\R^n) \quad \text{with} \quad  |Df|(\R^n) < \infty,
		\]
		such that
		\begin{equation}\label{eq:convergence_compactness}
			f_{\e_{j_k}}-c_k \to f \; \quad\text{in $L^1_{\loc}(\R^n)$ as $k \to \infty$}. 
		\end{equation}
		In addition, $f\in L^{1^*}(\R^n)$ and
		\begin{equation}\label{eq:tot_var_in_compactness}
			\frac{1}{4n}|Df|(\R^n)\leq \liminf_{j\to\infty} \mathsf{K}_{\e_{j}}(f_{\e_{j}}).
		\end{equation}
	\end{proposition}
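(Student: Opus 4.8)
The plan is to mimic the one-dimensional construction of Section~\ref{sec:lower_bound_one}, replacing intervals by axis-parallel cubes, and to derive a compactness statement from a uniform $BV$ bound on the piecewise-constant approximations. First I would pass to a subsequence (not relabeled) along which $\mathsf K_{\e_j}(f_{\e_j})$ converges to $L:=\liminf_j \mathsf K_{\e_j}(f_{\e_j})<\infty$. For each $j$, set $\delta_j:=\e_j/2$ and let $\tilde f_j$ be the $\delta_j$-scale piecewise-constant approximation of $f_{\e_j}$ on the grid $\delta_j\Z^n$, i.e. $\tilde f_j:=\sum_{Q}\1_Q (f_{\e_j})_Q$ where $Q$ ranges over the cubes of side $\delta_j$ induced by $\delta_j\Z^n$ and $(f_{\e_j})_Q:=\fint_Q f_{\e_j}$. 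Exactly as in Lemma~\ref{lemma:1/4|Dw|_leq_K_2epsilon}, the discrete-mollification operator $w\mapsto w^{\delta}$ does not increase local $L^1$ mass and $w^\delta\to w$ in $L^1_\loc$; these are the $n$-dimensional analogues and the proofs are identical up to notation (using the uncentered Lebesgue differentiation theorem for cubes, cf. Remark~\ref{rem:general_lebesgue}).

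The key estimate is the cube analogue of Lemma~\ref{lemma:1/4|Dw|_leq_K_2epsilon}(iii): for every $w\in L^1_\loc(\R^n)$ and $\delta>0$,
\[
\frac{1}{4n}\,|Dw^\delta|(\R^n)\le \mathsf K_{2\delta}(w).
\]
To see this, note $Dw^\delta$ is concentrated on the $(n-1)$-faces of the grid $\delta\Z^n$, so $|Dw^\delta|(\R^n)=\sum_{i=1}^n \delta^{n-1}\sum_{Q\sim_i Q'}|w_Q-w_{Q'}|$, where for each fixed coordinate direction $e_i$ the inner sum runs over pairs $Q,Q'$ of grid cubes sharing a face orthogonal to $e_i$. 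For each such pair, choosing $m:=\fint_{Q\cup Q'} w$ gives $|w_Q-w_{Q'}|\le 2\fint_{Q\cup Q'}|w-\fint_{Q\cup Q'}w|$, exactly as in the one-dimensional case. For fixed $i$, the rectangles $Q\cup Q'$ (which are $2\delta\times\delta\times\cdots$ boxes, but can be replaced by the inscribed $2\delta$-cube after splitting; alternatively one argues directly with the boxes since $\mathsf K$ is defined with cubes — here one should instead split $\R^n$ into two families of disjoint $2\delta$-cubes along direction $e_i$, paying a further factor $2$, and then discard the remaining directions) split into two subfamilies of pairwise disjoint sets, each of which, after enlargement to genuine $2\delta$-cubes, forms an admissible family $\mathcal H_{2\delta}$. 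Summing over the two parity classes and the $n$ directions yields the factor $4n$. This is the step I expect to require the most care: the bookkeeping that turns "boxes of aspect ratio $2{:}1$ overlapping in one direction" into admissible disjoint-cube families — and which is precisely why the constant is the non-optimal $4n$ rather than $4$.

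With the estimate in hand, $|D\tilde f_j|(\R^n)\le 4n\,\mathsf K_{2\delta_j}(f_{\e_j})=4n\,\mathsf K_{\e_j}(f_{\e_j})$, which is bounded uniformly in $j$. Subtracting the constants $c_j:=(f_{\e_j})_{Q_0^{(j)}}$ (the average over some fixed reference cube) to kill the translation invariance, the functions $\tilde f_j-c_j$ have uniformly bounded total variation and, being zero-average on a fixed cube, are bounded in $L^1_\loc$ by Poincaré; compactness of $BV$ in $L^1_\loc$ then yields a subsequence with $\tilde f_{j_k}-c_k\to f$ in $L^1_\loc$ for some $f$ with $|Df|(\R^n)\le\liminf 4n\,\mathsf K_{\e_j}(f_{\e_j})$, which is \eqref{eq:tot_var_in_compactness}. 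Since $f_{\e_j}-\tilde f_j\to 0$ in $L^1_\loc$ by the mollification property (applied with a diagonal argument, as $\|f_{\e_j}-\tilde f_j\|_{L^1(K)}\le \|f_{\e_j}-f\|_{L^1(K')}+\|f-f^{\delta_j}\|_{L^1(K')}+\|(f-f_{\e_j})^{\delta_j}\|_{L^1(K)}\to 0$ once $f_{\e_j}-c_j$ is known to converge — so one sets up the two convergences simultaneously), we get $f_{\e_{j_k}}-c_k\to f$ in $L^1_\loc$, proving \eqref{eq:convergence_compactness}. Finally, the improved integrability $f\in L^{1^*}(\R^n)$ follows from the Sobolev/Gagliardo–Nirenberg inequality for $BV$ functions on $\R^n$ applied to the translated approximants, whose $L^{1^*}$ norms are controlled by $|D(\tilde f_j-c_j)|(\R^n)\le 4n\,\mathsf K_{\e_j}(f_{\e_j})$, uniformly in $j$; lower semicontinuity of the $L^{1^*}$ norm under $L^1_\loc$ convergence gives $f\in L^{1^*}$.
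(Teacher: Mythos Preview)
Your overall strategy matches the paper's, but there is a genuine gap in the step where you claim $f_{\e_j}-\tilde f_j\to 0$ in $L^1_\loc$. Your triangle-inequality bound invokes $\|f_{\e_j}-f\|_{L^1(K')}$, where $f$ is the very limit you are trying to produce; the parenthetical ``so one sets up the two convergences simultaneously'' does not break this circularity. In Proposition~\ref{prop:lower_bound_dimension_1} that triangle argument works only because the convergence $f_\e\to f$ is \emph{assumed} there, whereas in the compactness statement no limit is given a priori. The paper closes the loop with a direct quantitative estimate that makes no reference to any limit function (Lemma~\ref{lemma:e_close}):
\[
\|w-w^\delta\|_{L^1(\R^n)}=\sum_{Q\in\Qcal_\delta}\int_Q\Big|w-\fint_Q w\Big|\le\delta\,\Ksf_\delta(w).
\]
Since you work at scale $\delta_j=\e_j/2$ but only control $\Ksf_{\e_j}$, one further scale-comparison is needed (Lemma~\ref{lemma:delta_vs_delta_mezzi}: $\|w-w^{\delta/2}\|_{L^1}\le(1+2^n)\|w-w^\delta\|_{L^1}$); together these give $\|f_{\e_j}-\tilde f_j\|_{L^1(\R^n)}\le(1+2^n)\,\e_j\,\Ksf_{\e_j}(f_{\e_j})\to 0$, after which the rest of your outline goes through.

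A smaller point: your first route to $|Dw^\delta|\le 4n\,\Ksf_{2\delta}(w)$ stalls because $Q\cup Q'$ is a $2\delta\times\delta\times\cdots\times\delta$ box rather than a cube, and naively enlarging it to a $2\delta$-cube costs an extra factor $2^{n-1}$, which would spoil the constant in~\eqref{eq:tot_var_in_compactness}. Your parenthetical alternative (``split $\R^n$ into two families of disjoint $2\delta$-cubes along direction $e_i$'') is essentially the paper's argument and does give the correct constant, but it requires taking $m=\fint_P w$ for the ambient $2\delta$-cube $P$ rather than $m=\fint_{Q\cup Q'}w$: then the $2^{n-1}$ directional pairs inside $P$ tile $P$ exactly, the contribution of $P$ is at most $2(2\delta)^{n-1}\fint_P|w-\fint_P w|$, and summing over the two shifted meshes and the $n$ directions yields the clean factor $4n$.
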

	As in the one-dimensional case, before passing to the proof of this result, we need to introduce the piecewise $\eps$-approximations of $f$, as well as several elementary relations between $\Ksf_{\eps}(f)$ and both the $L^1$ norm and the total variation of such approximations.
	
	\subsection{Properties of the piecewise \texorpdfstring{$\delta$}{delta}-approximations}\label{sec:properties}
	We now consider the mesh of disjoint open cubes of side $\delta>0$ and edges parallel to the coordinate axes (called $\delta$-cubes):
	\begin{equation}\label{eq:def_mesh_cubes}
		\Q_\delta:=\{(0,\delta)^n+\delta z : z \in \Zbb^n\}.
	\end{equation}
	For any $\tau\in \R^n$ we will write
	\begin{equation}\label{eq:def_mesh_cubes_translated}
		\tau+\Q_\delta\coloneqq \{\tau+Q:Q\in\Q_\delta\}
	\end{equation}
	to denote the mesh $\Q_\delta$ translated by $\tau$.
	
	As before, given a function $w \in L^1_\loc(\R^n)$, we define the piecewise constant approximation at scale $\delta$ as  
	\begin{equation}\label{eq:def_w_delta_in_every_dimension}
		w^\delta(x) \coloneqq \sum_{Q \in \Qcal_{\delta}} \1_Q w_Q, \qquad w_Q \coloneqq \fint_Q w(x) \, \dd x. 
	\end{equation} 
	Just as in the one dimensional case, the estimate 
	\begin{equation}\label{eq:difference_averages}
		|w_Q-w_{Q'}| \leq \fint_Q |w(x)-m|\, \dd x+\fint_{Q'} |w(x)-m|\, \dd x
	\end{equation}
	remains valid for any pair of cubes $Q,Q' \in \Qcal_\delta$ and every $m \in \R$. This, as before, will allow us to estimate $|Dw^\delta|(\R^n)$ in terms of $\mathsf K_{2\delta}(w)$, except that in this case the estimate will carry a geometric constant that is smaller than the prefactor of the lower bound. 
	The following lemma is the analog of Lemma \ref{lemma:1/4|Dw|_leq_K_2epsilon} in the higher dimensional framework. 
	
	\begin{lemma}\label{lemma:properties_of_piecewise_general_dimension}
		Let $w \in L_\loc^1(\R^n)$ and let $\delta > 0$. Then:
		\begin{enumerate}
			\item[\rm (i)] for each $\delta >0$ and every cube $D_R= (-R,R)^n$, it holds
			\[
			\Vert w^\delta \Vert_{L^1(D_R)}\le \|w\|_{L^1(D_{R+\delta})};
			\]
			\item[\rm (ii)] $w^\delta \to w$ in $L^1_\loc(\R^n)$ as $\delta \to 0$; 
			\item[\rm (iii)] for each $\delta >0$, it holds
			\[
			\frac{1}{4n} |D w^\delta|(\R^n) \le \Ksf_{2\delta}(w).
			\]
		\end{enumerate}
	\end{lemma}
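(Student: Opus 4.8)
The plan follows the template of Lemma~\ref{lemma:1/4|Dw|_leq_K_2epsilon}, with intervals of $\I^\delta$ replaced by the cubes of $\Q_\delta$. For item (i), Jensen's inequality gives $\int_Q |w^\delta| = |Q|\,|w_Q| \le \int_Q |w|$ on each $Q \in \Q_\delta$; summing over the smallest union $U$ of cubes of $\Q_\delta$ that contains $D_R$ up to a null set, and noting $U \subseteq D_{R+\delta}$, yields the claim. For item (ii), one first assumes $w \in L^\infty(D_{R+\delta})$ and applies the uncentered Lebesgue differentiation theorem~\cite[Thm.~3.2]{F} to the mesh cube through a point $x$ — legitimate since it shrinks to $x$ with unit eccentricity — to get $w^\delta \to w$ a.e.\ on $D_R$, and then concludes by dominated convergence; the boundedness is removed by approximating $w$ in $L^1(D_{R+1})$ with $L^\infty$ functions and invoking item (i) for the difference, verbatim as in dimension one.

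The substance is item (iii), where the plan is to imitate the one-dimensional argument of Lemma~\ref{lemma:1/4|Dw|_leq_K_2epsilon}(iii). Since $w^\delta$ is piecewise constant on $\Q_\delta$, its gradient is concentrated on the $(n-1)$-dimensional faces of this mesh, and $|Dw^\delta|(\R^n)$ equals the sum over all faces $F$ of $\Q_\delta$ of $\delta^{n-1}|w_{Q_F^+} - w_{Q_F^-}|$, where $Q_F^\pm$ are the two cubes of $\Q_\delta$ sharing $F$ (we may assume this sum finite, else there is nothing to prove). I would regroup this sum over the $2\delta$-cubes of the two meshes $\Q_{2\delta}$ and $\delta(1,\dots,1)+\Q_{2\delta}$ — the higher-dimensional analogues of the two partitions $\P_1,\P_2$ of the one-dimensional proof — each of which tiles $\R^n$ by $2\delta$-cubes that are unions of $2^n$ cubes of $\Q_\delta$. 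Two elementary observations then do the work. First, a \emph{per-cube estimate}: a single $2\delta$-cube $\hat Q$ of either mesh contains $2^n$ cubes $Q_1,\dots,Q_{2^n}$ of $\Q_\delta$ and has $n\,2^{n-1}$ faces of $\Q_\delta$ in its interior, each $Q_a$ being shared by exactly $n$ of them (one per coordinate direction). Applying \eqref{eq:difference_averages} with $m = w_{\hat Q}$ to each pair of adjacent subcubes and summing, the quantity $\fint_{Q_a}|w - w_{\hat Q}|$ gets counted $n$ times, so the interior faces contribute
\[
\delta^{n-1}\!\!\sum_{F\subset\mathrm{int}\,\hat Q}\!\!\bigl|w_{Q_F^+} - w_{Q_F^-}\bigr| \;\le\; \tfrac{n}{\delta}\!\int_{\hat Q}|w - w_{\hat Q}| \;=\; 2n\,(2\delta)^{n-1}\fint_{\hat Q}|w - w_{\hat Q}|,
\]
and summing over the disjoint cubes of one mesh gives $\le 2n\,\Ksf_{2\delta}(w)$. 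Second, a \emph{covering fact}: every face of $\Q_\delta$ lies in the interior of exactly one cube of exactly one of the two meshes — a face on $\{x_k = m\delta\}$ is interior to a cube of $\Q_{2\delta}$ when $m$ is odd and to a cube of $\delta(1,\dots,1)+\Q_{2\delta}$ when $m$ is even, the positions in the remaining $n-1$ coordinates never obstructing either possibility. Adding the per-cube inequality over the two meshes, the covering fact reassembles the left-hand side into $|Dw^\delta|(\R^n)$, while the right-hand side is $2\cdot 2n\,\Ksf_{2\delta}(w)$; this is exactly the asserted bound $|Dw^\delta|(\R^n) \le 4n\,\Ksf_{2\delta}(w)$.

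The step I expect to be the main obstacle is the bookkeeping in item (iii) needed to land on the constant $4n$ rather than something worse. Pairing each cube only with its neighbour in a single fixed direction $e_i$ — the most literal transcription of the one-dimensional proof — forces one to enlarge the resulting $2\delta\times\delta\times\dots\times\delta$ box to a $2\delta$-cube, squandering a volume factor and ending up with the much larger constant $n\,2^{n+1}$; pairing \emph{simultaneously in all $n$ directions}, i.e.\ working directly with the $2\delta$-subcube decomposition as above, is what avoids this loss. The extra factor $n$ (relative to the one-dimensional $4$) then appears transparently as the number of interior faces each subcube belongs to, while the two shifted meshes still contribute the customary factor $2$. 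With the lemma in hand, the compactness statement of Proposition~\ref{prop:compactness} should follow from item (iii) together with a subsequence/diagonal argument, the compactness of $BV$ in $L^1_\loc$, and the Sobolev embedding $BV(\R^n) \hookrightarrow L^{1^*}(\R^n)$, but that is beyond the scope of the present lemma.
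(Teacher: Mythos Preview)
Your proposal is correct and follows essentially the same approach as the paper: parts (i) and (ii) are dismissed there as analogous to the one-dimensional Lemma~\ref{lemma:1/4|Dw|_leq_K_2epsilon}, and for (iii) the paper carries out precisely the per-cube estimate (each $\delta$-subcube of $P\in\Q_{2\delta}$ contributes to exactly $n$ interior-face pairs, yielding $|Dw^\delta|(P)\le 2n(2\delta)^{n-1}\Osc(w,P)$) together with the two-mesh covering identity $|Dw^\delta|(\R^n)=\sum_{P\in\Q_{2\delta}}|Dw^\delta|(P)+\sum_{P\in(\delta,\dots,\delta)+\Q_{2\delta}}|Dw^\delta|(P)$ that you describe. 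Your parity justification for the covering fact is a touch more explicit than the paper's, but the argument is the same.
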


	\begin{proof}

		The proofs of Points {\rm (i)} and {\rm(ii)} are completely analogous to the ones given in Lemma \ref{lemma:1/4|Dw|_leq_K_2epsilon} in the one dimensional case and they are therefore omitted.  
		
		We address {\rm (iii)}. Let us fix $P\in\Q_{2\delta}$ and let us denote by $\Ncal_\delta(P)$ the family of unordered pairs $\{Q,Q'\}$ of distinct cubes $Q,Q'\in \Q_\delta$ that lie inside $P$ and are neighbours, i.e. they share an $(n-1)$-dimensional face, see Fig.~\ref{fig:cubes_neighbour_second}.
		\begin{figure}
			\captionsetup{width=.45\columnwidth}
			\subfloat[The mesh $\mathcal Q_{\delta}$ in $\mathbb R^2$ and some of its cubes $Q_1,Q_2,Q_3,Q_4$ contained in a cube $P \in \mathcal Q_{2\delta}$ (black, thick). The $\delta$-cube $Q_4$ has $Q_2$ and $Q_3$ as neighbours. Thus $\{Q_4,Q_3\}$, $\{Q_4, Q_2\} \in \mathcal N_\delta(P)$.]
			{
				\begin{tikzpicture}[scale=.8]
					\filldraw[white] (0,-4) circle (2.5pt);
					
					\draw[<->] (0,-.3) -- (1,-.3);
					\filldraw (.5,-.5) circle (.0pt) node {\scriptsize  $\delta$};
					
					\draw[<->] (-.3,0) -- (-.3,1);
					\filldraw (-.5,.5) circle (.0pt) node {\scriptsize  $\delta$};
					
					
					\draw[fill=gray!20] (0, 0) -- (0,1) -- (2,1) -- (2,0) -- (0,0);
					\draw[fill=gray!20] (0, 1) -- (0,2) -- (2,2) -- (2,1) -- (0,1);
					
					\filldraw (.5,1.5) circle (.0pt) node {\scriptsize  $Q_1$};
					\filldraw (1.5,1.5) circle (.0pt) node {\scriptsize  $Q_2$};
					\filldraw (.5,.5) circle (.0pt) node {\scriptsize  $Q_3$};
					\filldraw (1.5,.5) circle (.0pt) node {\scriptsize  $Q_4$};
					
					\foreach \x in {-3,...,3}{
						\draw[gray!50] (\x,-3.5) -- (\x,3.5);
					}
					\foreach \y in {-3,...,3}{
						\draw[gray!50] (-3.5,\y) -- (3.5,\y);
					}
					
					\draw[color=black, very thick] (0,0) -- (2,0) -- (2,2) -- (0,2) -- (0,0);
					
					\draw[->] (-0.5,2.5) -- (-.1,2.1);
					\filldraw (-0.7,2.7) circle (.0pt) node {\scriptsize  { $P$}};
					
					\filldraw (0,0) circle (2.5pt);
					
				\end{tikzpicture}
				\label{fig:cubes_neighbour_second}
			}\qquad
			\subfloat[The mesh $\Qcal_{2\delta}$ (dashed) in $\R^2$ and the translated mesh $(\delta,\delta)+ \Qcal_{2\delta}$ (gray). Every jump of $w^\delta$ happens across a 1-dimensional face of a cube in $\Q_\delta$. These faces appear exactly once in the \emph{interior} of a cube $P$ belonging to either $\Q_{2\delta}$ or $(\delta,\delta)+\Q_{2\delta}$. ]
			{
				\begin{tikzpicture}[scale=.8]
					
					\filldraw[white] (0,-4) circle (2.5pt);
				
					\draw[->,  very thick ] (.1,.1) -- (.95,.95);
					
					\foreach \x in {-3,-1,...,3}{
						\draw[black!10, line width=2pt] (\x,-3.5) -- (\x,3.5);
					}
					\foreach \y in {-3,-1,...,3}{
						\draw[black!10, line width=2pt] (-3.5,\y) -- (3.5,\y);
					}
					
					\foreach \x in {-2,0,2}{
						\draw[black, loosely dashed, thick] (\x,-3.5) -- (\x,3.5);
					}
					\foreach \y in {-2,0,2}{
						\draw[black, loosely dashed, thick] (-3.5,\y) -- (3.5,\y);
					}
					
					\filldraw (0,0) circle (2.5pt);	
				\end{tikzpicture}
				\label{fig:translation}
			}
			\caption{}
			\label{fig:cubes_neighbour}
		\end{figure}
		Given $\{Q,Q'\}\in \Ncal_\delta(P)$, we may estimate 
		\[
		|w_Q-w_{Q'}|=\left|\fint_Q w(x)\,\dd x-\fint_{Q'}w(x)\,\dd x\right| \leq\fint_Q|w(x)-m|\,\dd x+\fint_{Q'}|w(x)-m|\,\dd x
		\]
		for any $m \in \R$. In particular, choosing $m\coloneqq\fint_{P}w$, we obtain the estimate
		\begin{equation}\label{eq:jump_betwen_Q_Q'}
			|w_Q-w_{Q'}|\leq \frac{1}{\delta^{n}}\int_{Q \cup Q'} \Big|w(x)-\fint_{P}w\Big|\,\dd x.
		\end{equation} 
		
		We now want to estimate the derivative of $w^\delta$ in $P$, taking advantage of the fact that it is purely jump-type. More precisely, by the very definition of $w^\delta$ and exploiting the inequality \eqref{eq:jump_betwen_Q_Q'}, we have 
		\begin{align*}
			|D w^\delta|(P)&\stackrel{\phantom{\eqref{eq:jump_betwen_Q_Q'}}}=\delta^{n-1}\sum_{\{Q,Q'\}\in \Ncal_\delta(P)}|w_Q-w_{Q'}|\\
			& \stackrel{\eqref{eq:jump_betwen_Q_Q'}}\leq \delta^{n-1} \sum_{\{Q,Q'\}\in \Ncal_\delta(P)}\frac{1}{\delta^{n}} \int_{Q\cup Q'}\Big|w(x)-\fint_P w\Big|\, \dd x\\
			&\stackrel{\phantom{\eqref{eq:jump_betwen_Q_Q'}}}\leq \delta^{n-1} n\frac{1}{\delta^{n}} \int_{P}\Big|w(x)-\fint_P w\Big|\, \dd x
		\end{align*}
		where we have used that every $\delta$-cube appears in exactly $n$ unordered pairs in $\Ncal_\delta(P)$, i.e., it holds
		\[
		n \1_P(x) = \sum_{\{Q,Q'\}\in \Ncal_\delta(P)} \1_{Q\cup Q'}(x) \quad\text{for a.e. $x \in \R^n$}.
		\]
		Using now that $\Lcal^n(P)=(2\delta)^n$ we get 
		\begin{equation*}
			|D w^\delta|(P) \leq 2n (2\delta)^{n-1} \fint_{P}\Big|w(x)-\fint_P w\Big|\, \dd x. 
		\end{equation*}
		Summing up among all $P\in\Q_{2\delta}$ yields the following estimate:
		\begin{equation}\label{eq:estimate_der_P}
			\sum_{P\in\Q_{2\delta}} |Dw^\delta|(P)\leq 2n (2\delta)^{n-1}\sum_{P\in\Q_{2\delta}} \fint_P\Big| w(x)-\fint_P w\Big|\, \dd x \leq 2n  \mathsf K_{2\delta}(w).
		\end{equation} 
		It now remains to observe that 
		\begin{equation}\label{eq:u_epsilon_nD}
			|Dw^\delta|(\R^n)=\sum_{P\in \Q_{2\delta}} |Dw^\delta|(P)+\sum_{P\in (\delta,\ldots,\delta)+\Q_{2\delta}} |Dw^\delta|(P),
		\end{equation}
		which follows from the fact that every $(n-1)$-dimensional face of a cube in $\Q_\delta$ appears exactly once in the interior of a cube $P$ belonging to either $\Q_{2\delta}$ or $(\delta,\ldots,\delta)+\Q_{2\delta}$, see Fig. \ref{fig:translation}. Thus, combining \eqref{eq:u_epsilon_nD} with \eqref{eq:estimate_der_P} (and the analogous estimate for the translated partition $(\delta,\ldots,\delta)+\Q_{2\delta}$), we obtain 
		\begin{equation*}
			|Dw^\delta|(\R^n) \leq 2(2n)  \mathsf K_{2\delta}(w) = 4n  \mathsf K_{2\delta}(w), 
		\end{equation*}
		which concludes the proof. 
	\end{proof}
		
	The following quantitative estimate, which follows directly from the definition of $\Ksf_\delta$, will be helpful to prove our compactness result. 
	\begin{lemma}\label{lemma:e_close}
		Let $w \in L_\loc^1(\R^n)$. Then, for every $\delta>0$, it holds 
		\[
		\|w - w^{\delta}\|_{L^1(\R^n)} \le \delta\Ksf_{\delta}(w).
		\]
	\end{lemma}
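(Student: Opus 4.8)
The plan is to prove the pointwise estimate $\|w - w^\delta\|_{L^1(\R^n)} \le \delta \Ksf_\delta(w)$ by unwinding the definitions on a single cube and then summing. First I would fix a cube $Q \in \Q_\delta$ and observe that, by the very definition of the piecewise approximation in \eqref{eq:def_w_delta_in_every_dimension}, the restriction of $w^\delta$ to $Q$ is the constant $w_Q = \fint_Q w$. Hence
\[
\int_Q |w(x) - w^\delta(x)|\,\dd x = \int_Q \Big|w(x) - \fint_Q w\Big|\,\dd x = \delta^n \fint_Q \Big|w(x) - \fint_Q w\Big|\,\dd x = \delta \cdot \delta^{n-1}\fint_Q \Big|w(x) - \fint_Q w\Big|\,\dd x,
\]
using $\Lcal^n(Q) = \delta^n$.

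Next I would sum this identity over all $Q \in \Q_\delta$. Since the cubes of $\Q_\delta$ form a partition of $\R^n$ (up to a Lebesgue-null set) consisting of disjoint $\delta$-cubes, the family $\Q_\delta$ is an admissible competitor in the supremum defining $\Ksf_\delta(w)$ — indeed $\Ksf_\delta$ allows families of disjoint $\delta$-cubes of arbitrary cardinality and arbitrary orientation, so in particular the axis-parallel mesh is allowed. Therefore
\[
\|w - w^\delta\|_{L^1(\R^n)} = \sum_{Q \in \Q_\delta} \int_Q |w - w^\delta|\,\dd x = \delta \sum_{Q \in \Q_\delta} \delta^{n-1}\fint_Q \Big|w(x) - \fint_Q w\Big|\,\dd x \le \delta \sup_{\Hcal_\delta} \sum_{Q \in \Hcal_\delta} \delta^{n-1}\fint_Q \Big|w(x) - \fint_Q w\Big|\,\dd x = \delta\,\Ksf_\delta(w),
\]
which is the claimed bound. (If $\Ksf_\delta(w) = \infty$ the inequality is trivial, so one may assume it finite, in which case the sum on the left is automatically finite.)

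There is essentially no obstacle here: the statement is a direct bookkeeping consequence of the definitions, the only mild point being to notice that the standard mesh $\Q_\delta$ is an admissible family in the definition of $\Ksf_\delta$ and that its cubes tile $\R^n$, so no geometric constant is lost. The factor $\delta$ comes precisely from rewriting $\delta^n = \delta \cdot \delta^{n-1}$, matching the normalization $\delta^{n-1}\fint_Q$ appearing in the energy $\Ksf_\delta$. One could alternatively phrase the argument using the translated meshes $\tau + \Q_\delta$, but this is not needed since a single mesh already tiles the whole space.
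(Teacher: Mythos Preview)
Your proof is correct and follows essentially the same approach as the paper: expand $\|w-w^\delta\|_{L^1}$ as a sum over the mesh $\Q_\delta$, rewrite $\delta^n=\delta\cdot\delta^{n-1}$, and bound by the supremum defining $\Ksf_\delta$. If anything, your write-up is slightly more careful in noting that the last step is an inequality (the mesh being one admissible competitor) and in disposing of the case $\Ksf_\delta(w)=\infty$.
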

	
	\begin{proof}
		Consider a partition $\Qcal_\delta$ of $\R^n$ (up to $\Lcal^n$-negligible sets) into open cubes of side-length $\delta$. Then, the desired estimate is a consequence of the following elementary computation:
		\begin{align*}
			\Vert w - w^\delta \Vert_{L^1(\R^n)} & \leq 
			\sum_{\substack{Q\in \Qcal_\delta}} \int_Q\Big| w(x) - \fint_Q w \Big| \, \dd x\\ 
			& =  \sum_{Q \in \mathcal Q_{\delta}}\delta^{n} \fint_{Q} \Big \vert w(x) - \fint_Q  w \Big\vert \, \dd x \\
			& = {\delta}  \Ksf_{\delta}(w).\qedhere
		\end{align*}
	\end{proof}
	
	The last necessary piece for the proof of Proposition~\ref{prop:compactness} is the following:
	\begin{lemma}\label{lemma:delta_vs_delta_mezzi}
		Let $w\in L^1_\loc(\R^n)$. Then
		\[
		\|w-w^{\delta/2}\|_{L^1(\R^n)}\leq (1+2^n) \|w-w^\delta\|_{L^1(\R^n)}.
		\]
	\end{lemma}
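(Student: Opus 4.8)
The plan is to interpolate $w^{\delta/2}$ between $w$ and $w^{\delta}$ through the triangle inequality
\[
\|w-w^{\delta/2}\|_{L^1(\R^n)}\le \|w-w^{\delta}\|_{L^1(\R^n)}+\|w^{\delta}-w^{\delta/2}\|_{L^1(\R^n)},
\]
so that the whole statement reduces to proving the single bound $\|w^{\delta}-w^{\delta/2}\|_{L^1(\R^n)}\le 2^n\,\|w-w^{\delta}\|_{L^1(\R^n)}$. The key structural observation I would use is that the mesh $\Q_{\delta/2}$ refines $\Q_{\delta}$: by \eqref{eq:def_mesh_cubes}, every cube $Q\in\Q_{\delta}$ decomposes, up to an $\Lcal^n$-negligible set, as a disjoint union $Q=\bigcup_{j=1}^{2^n}Q_j$ of $2^n$ cubes $Q_j\in\Q_{\delta/2}$, each with $\Lcal^n(Q_j)=(\delta/2)^n=2^{-n}\Lcal^n(Q)$. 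On such a $Q$ one has $w^{\delta}\equiv w_Q$, while $w^{\delta/2}\equiv w_{Q_j}$ on each $Q_j$, whence $\|w^{\delta}-w^{\delta/2}\|_{L^1(Q)}=\sum_{j=1}^{2^n}\Lcal^n(Q_j)\,|w_Q-w_{Q_j}|$.

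The next step is to estimate each difference of averages $|w_Q-w_{Q_j}|$ by enlarging the integration domain from $Q_j$ to the full cube $Q$. Since $w_{Q_j}=\fint_{Q_j}w$, we have
\[
|w_Q-w_{Q_j}|=\left|\fint_{Q_j}(w-w_Q)\right|\le \fint_{Q_j}|w-w_Q|\le \frac{1}{\Lcal^n(Q_j)}\int_{Q}|w-w_Q|,
\]
using $|w-w_Q|\ge 0$ and $Q_j\subset Q$ in the last step. Hence $\Lcal^n(Q_j)\,|w_Q-w_{Q_j}|\le\int_Q|w-w_Q|=\|w-w^{\delta}\|_{L^1(Q)}$, and summing over the $2^n$ sub-cubes gives $\|w^{\delta}-w^{\delta/2}\|_{L^1(Q)}\le 2^n\,\|w-w^{\delta}\|_{L^1(Q)}$. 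Summing this over all $Q\in\Q_{\delta}$ yields $\|w^{\delta}-w^{\delta/2}\|_{L^1(\R^n)}\le 2^n\,\|w-w^{\delta}\|_{L^1(\R^n)}$, and substituting into the triangle inequality above produces the claimed estimate with constant $1+2^n$.

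I do not expect a genuine obstacle here; the only point that deserves a (brief) word is the nestedness of the dyadic meshes, which is immediate from the definition \eqref{eq:def_mesh_cubes}. I would also remark in passing that the crude enlargement from $Q_j$ to $Q$ is precisely what generates the factor $2^n$: keeping the integral over $Q_j$ instead would already give the sharper constant $2$. However, the bound $(1+2^n)$ as stated is all that will be needed later — for instance to compare the scales $\delta$ and $\delta/2$ when passing from $\Ksf_{\e}$ to $\Ksf_{2\e}$ in the proof of the compactness result — so there is no reason to optimize it.
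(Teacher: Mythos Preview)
Your proof is correct and follows essentially the same route as the paper's: triangle inequality, then a cube-by-cube bound of $\|w^\delta-w^{\delta/2}\|_{L^1(Q)}$ via $|Q_j|\,|w_Q-w_{Q_j}|\le\int_Q|w-w_Q|$. Your derivation of this last inequality is in fact slightly more direct than the paper's (which first passes through an auxiliary sub-cube $Q''$ found by a convex-combination argument before applying~\eqref{eq:difference_averages}), and your closing remark is right --- keeping the integral over $Q_j$ already gives the constant $2$.
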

	
	\begin{proof}
		Let us first prove that for every $Q\in \Qcal_\delta$ it holds
		\begin{equation}\label{eq:local_difference_average}
			\|w-w^{\delta/2}\|_{L^1(Q)}\leq (1+2^n) \|w-w^{\delta}\|_{L^1(Q)}.
		\end{equation}
		Given a $\delta$-cube $Q\in \Qcal_\delta$, we denote by $\Ccal(Q)$ the family of $\tfrac{\delta}{2}$-subcubes of $Q$, namely those cubes in $\Qcal_{\delta/2}$ contained in $Q$.  
		Since we can express $w_{Q}$ as a convex sum as
		\[
		w_Q=\frac{1}{2^n}\sum_{Q'\in \Ccal(Q)}w_{Q'},
		\]
		we deduce that for every  $Q'\in \Ccal(Q)$ there exists $Q''\in\Ccal(Q)$ such that $|w_{Q'}-w_Q|\leq |w_{Q'}-w_{Q''}|$. Applying \eqref{eq:difference_averages} with $Q'$ and $Q''$ we obtain
		\[
		|w_Q-w_{Q'}|\leq |w_{Q'}-w_{Q''}|\leq \frac{1}{|Q'|}\int_Q |w(x)-w_Q|\, \dd x\qquad\text{for every $Q'\in \Ccal(Q)$.}
		\]
		Using the triangle inequality we get
		\begin{align*}
			\|w-w^{\delta/2}\|_{L^1(Q)}&\leq \|w-w^{\delta}\|_{L^1(Q)}+\|w^\delta-w^{\delta/2}\|_{L^1(Q)}\\
			&\leq \|w-w^{\delta}\|_{L^1(Q)}+\sum_{Q'\in \Ccal(Q)} |Q'|\,|w_Q-w_{Q'}|\\
			&\leq \|w-w^{\delta}\|_{L^1(Q)}+\sum_{Q'\in \Ccal(Q)} \int_{Q} |w(x)-w_Q|\, \dd x\\
			&\leq (1+2^n) \|w-w^{\delta}\|_{L^1(Q)}.
		\end{align*}
		Summing up over all cubes $Q \in \Qcal_\delta$ we obtain the sought global estimate.
	\end{proof}
	
	\subsection{Compactness.}
	
	We are finally ready to give the proof of the compactness property stated in Proposition~\ref{prop:compactness}. 
	
	\begin{proof}[Proof of Proposition~\ref{prop:compactness}.]
		It is not restrictive to assume that
		\[
		\lim_{j\to \infty} {\mathsf K}_{\e_j}(f_{\eps_j}) = \liminf_{j\to\infty} {\mathsf K}_{\e_j}(f_{\eps_j}) < \infty.
		\]
		For simplicity, we will denote from now onward $f_j \coloneqq f_{\e_j}$ for every $j \in \mathbb N$. Recalling the notation introduced in \eqref{eq:def_w_delta_in_every_dimension}, by Lemmas~\ref{lemma:e_close} and \ref{lemma:delta_vs_delta_mezzi} (with $c(n)=(1+2^n)$) we get
		\[
		\limsup_{j \to \infty} \| f_j - f_j^{\e_j/2} \|_{L^1(\R^n)} \le c(n)  \lim_{j \to \infty} \eps_j \Ksf_{\e_j}(f_{\e_j}) = 0.
		\]
		Hence, we conclude
		\begin{equation}\label{eq:intermediate} 
			f_j - f_j^{\e_j/2} \to 0 \quad \text{in $L^1(\R^n)$ as $j \to \infty$}. 
		\end{equation}  
		On the other hand, Lemma \ref{lemma:properties_of_piecewise_general_dimension} (iii) with $w = f_j$ and $\delta = \frac{\e_j}{2}$ gives 
		\begin{equation}\label{eq:bound_TV}
			\limsup_{j \to \infty} |D f_j^{\e_j/2}|(\R^n) \le 4n \lim_{j \to \infty} \Ksf_{\e_j}(f_j) < \infty.
		\end{equation}
		In particular, the sequence $(f_j^{\e_j/2})_j$ has uniformly bounded total variation on $\R^n$. Resorting to \cite[Thm.~3.47]{AFP} we conclude that there exists a sequence $(c_j)_j \subset \R$ such that for every $j \in \mathbb N$ it holds 
		\begin{equation}\label{eq:3.47}
			\| f_j^{\e_j/2} - c_j \|_{L^{1^*}(\R^n)} \le \gamma(n) |D f_j^{\e_j/2}|(\R^n)
		\end{equation}
		for some dimensional constant $\gamma(n)>0$. Here we have denoted by $1^*$ the Sobolev embedding exponent 
		\begin{equation*}
			1^* = \begin{cases}   
				\frac{n}{n-1} & \text{if $n\ge 2$} \\
				\,\,\infty & \text{if $n = 1$} 
			\end{cases}.
		\end{equation*}
		
		We apply the $BV$ compactness theorem~\cite[Thm.~3.23]{AFP} to conclude that there exists a function $f \in L^1_\loc(\R^n)$ and a subsequence (not relabeled) of the $j$'s such that
		\[
		f_j^{\e_j/2} - c_j \to f \quad\text{in $L^1_\loc(\R^n)$ as $j \to \infty$}.
		\]
		In light of~\eqref{eq:intermediate} we also conclude that 
		\[
		f_j - c_j \to f \quad \text{in $L^1_\loc(\R^n)$ as $j \to \infty$.} 
		\]
		This proves the first assertion.
		
		Let us show that $f\in L^{1^*}(\R^n)$. By \eqref{eq:3.47} the sequence $(f_j^{\e_j/2}-c_j)$ is equibounded in $L^{1^*}(\R^n)$, hence by Banach-Alaoglu's theorem it converges weakly star, up to a subsequence, to some $g\in L^{1^*}(\R^n)$. Since the sequence converges also in $L^1_\loc(\R^n)$ to $f$, it is straightforward to check that $f=g$ a.e., therefore $f\in L^{1^*}(\R^n)$.
		
		Finally, inequality \eqref{eq:tot_var_in_compactness} follows directly from \eqref{eq:bound_TV} and the lower semicontinuity of the total variation with respect to $L^1_\loc$-convergence.
	\end{proof} 
	
	\begin{remark}\label{rem:better_compactness} 
		Keeping in mind the close relationship between the $\mathsf K_\e$ functionals and the total variation, Proposition \ref{prop:compactness} can be thought of as an analog to Rellich's compactness theorem. However, while in the latter the convergence can be lifted to $L^p_\loc$ for every $1\leq p<1^*$, in our case the convergence \eqref{eq:convergence_compactness} cannot be improved to $L^{p}_\loc$ for any $p\neq 1$. Indeed, fix a function $\rho$ with support in the unit ball $B_1$ that belongs to $L^1\setminus L^p$ for every $p>1$. Then the family of functions $f_\e(x) \coloneqq  \rho(x/\e)$ converges to zero in $L^1$. Moreover, for every $\e$-cube $Q$ it holds
		\[
		\fint_{Q} \Big\vert f_\e(x) - \fint_{Q} f_\e\Big\vert \dd x\leq 2\fint_Q |f_\e|\leq 2\|\rho\|_{L^1}.
		\]
		Since the number of $\e$-cubes that intersect the support of $f_\e$ is bounded by a dimensional constant $M(n)$, then
		$\Ksf_\e(f_\e)\leq \e^{n-1}2M(n) \|\rho\|_{L^1(\R^n)}$. However, $f_\e\not\to 0$ in $L^p_\loc$ for any $p> 1$, because $\|f_\e\|_{L^p(B_1)}=\infty$ for every $\e>0$. A simple truncation of this example gives a counterexample even under a uniform $L^p$ bound on $(f_\e)_\e$.
	\end{remark}

	\section{Sharp lower bound in any dimension: a blow-up proof}\label{sec:lower_bound_general_case}
	
	Goal of this section is to prove the following sharp lower bound in any space dimension $n\ge 1$, improving the non-optimal constant $\tfrac{1}{4n}$ of Proposition~\ref{prop:compactness} to the optimal one $\tfrac14$.
	
	\begin{proposition}\label{prop:lower_bound_dimension_n}
		Let $(f_\eps)_{\eps>0} \subset L_\loc^1(\R^n)$ be such that $f_\e \to f$ in $L^1_\loc(\R)$ as $\eps \to 0$. Then 
		\begin{equation}\label{eq:lower_bound_dimension_n}
			\frac14 |Df|(\R^n)\leq \liminf_{\eps \to 0}\mathsf K_{\e}(f_\e).
		\end{equation}
	\end{proposition}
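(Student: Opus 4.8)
The plan is to run a Fonseca--M\"uller-type blow-up argument: localize the energy, and at $|Df|$-almost every point reduce, via the mono-directionality of $BV$ functions, to the one-dimensional computation already performed in Lemma~\ref{lemma:1/4|Dw|_leq_K_2epsilon}. Set $L:=\liminf_{\eps\to0}\mathsf K_\eps(f_\eps)$; if $L=\infty$ there is nothing to prove, so I fix an infinitesimal sequence $\eps_j$ with $\mathsf K_{\eps_j}(f_{\eps_j})\to L<\infty$ and write $f_j:=f_{\eps_j}$. By Proposition~\ref{prop:compactness} there are constants $c_j$ with $f_j-c_j\to\tilde f$ in $L^1_\loc$ along a subsequence, for some $\tilde f$ with $|D\tilde f|(\R^n)<\infty$; since $f_j\to f$ in $L^1_\loc$ the constants converge to some $c\in\R$, hence $f=\tilde f+c$ and in particular $|Df|(\R^n)<\infty$, so the polar $\nu=\dd Df/\dd|Df|$ and the set $S$ of Lemma~\ref{lemma:monodirectional} are available. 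For $A\subset\R^n$ open, define the localized energy $\mathsf K_\eps(f;A)$ exactly as $\mathsf K_\eps(f)$ but restricting to disjoint families $\Hcal$ of $\eps$-cubes with $\bigcup_{Q\in\Hcal}Q\subset A$. This set function is superadditive on pairwise disjoint open sets and bounded above by $\mathsf K_\eps(f)=\mathsf K_\eps(f;\R^n)$; consequently $\beta(A):=\liminf_j\mathsf K_{\eps_j}(f_j;A)$ is superadditive and satisfies $\sum_i\beta(Q_i)\le L$ for every countable family of pairwise disjoint open sets $(Q_i)_i$.

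\emph{Blow-up estimate.} I would prove that for every $x_0\in S$ one has $\liminf_{r\to0}\beta(Q_r(x_0))/|Df|(Q_r(x_0))\ge\tfrac14$, where $Q_r(x_0):=x_0+R_0((-r,r)^n)$ with $R_0\in SO(n)$ chosen so that $R_0e_n=\nu_0:=\nu(x_0)$. After a translation and rotation we may take $x_0=0$, $R_0=\mathrm{Id}$, $\nu_0=e_n$, so $Q_r=(-r,r)^n$, and by Remark~\ref{rem:general_lebesgue} together with the bounds $D_{e_n}f(Q_r)\le|D_{e_n}f|(Q_r)\le|Df|(Q_r)$ we get $|D_{e_n}f|(Q_r)/|Df|(Q_r)\to1$ as $r\to0$. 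Fix $r$ and $j$ with $\eps_j<r$, set $\delta:=\eps_j/2$, and write $x=(x',t)\in\R^{n-1}\times\R$. For each $\eps_j$-cube $Q'_\alpha\subset(-r,r)^{n-1}$ of the standard mesh of $\R^{n-1}$ put $v_\alpha(t):=\fint_{Q'_\alpha}f_j(x',t)\,\dd x'$. The one-dimensional computation inside the proof of Lemma~\ref{lemma:1/4|Dw|_leq_K_2epsilon}(iii), localized to the interval $(-r,r)$, shows that one of the two partitions of $\R$ into $\eps_j$-intervals with endpoints in $\tfrac{\eps_j}{2}\Z$ --- call it $\Pcal^{s_\alpha}$ --- satisfies
\[
\sum_{\substack{J\in\Pcal^{s_\alpha}\\ J\subset(-r,r)}}\fint_J\Big|v_\alpha-\fint_J v_\alpha\Big|\,\dd t\ \ge\ \frac14\,\big|D v_\alpha^{\,\delta}\big|\big((-r+\delta,\,r-\delta)\big),
\]
with $v_\alpha^{\,\delta}$ the one-dimensional $\delta$-approximation from~\eqref{eq:def_w_delta}. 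The crucial geometric point is that, since a given $\alpha$ uses only one of the two shifts, the collection $\{Q'_\alpha\times J: J\in\Pcal^{s_\alpha},\ J\subset(-r,r)\}$ (over the admissible $\alpha$) is a \emph{single} disjoint family of $\eps_j$-cubes contained in $Q_r$; this is exactly where working along the single direction $\nu_0$ lets us absorb the ``two partitions'' loss of the $1$D lemma into a two-class split of the transverse cubes, and thereby avoid the spurious factor $n$ of Lemma~\ref{lemma:properties_of_piecewise_general_dimension}. Using the elementary slicing inequality $\eps_j^{n-1}\fint_{Q'\times J}|f_j-\fint_{Q'\times J}f_j|\ge\eps_j^{n-1}\fint_J|v_\alpha-\fint_J v_\alpha|$ (Jensen in $x'$), I obtain
\[
\mathsf K_{\eps_j}(f_j;Q_r)\ \ge\ \eps_j^{n-1}\sum_\alpha\sum_{\substack{J\in\Pcal^{s_\alpha}\\ J\subset(-r,r)}}\fint_J\Big|v_\alpha-\fint_J v_\alpha\Big|\,\dd t\ \ge\ \frac14\,|D_{e_n}\hat f_j|\big(U_{r,j}\times(-r+\delta,r-\delta)\big),
\]
where $\hat f_j$ is the piecewise constant function equal to $v_\alpha^{\,\delta}(t)$ on $Q'_\alpha\times\R$ and $U_{r,j}$ is the union of the admissible transverse cubes (the last equality is just the product structure of $D_{e_n}\hat f_j$).

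\emph{Passing to the limit and globalizing.} Arguing as in Lemma~\ref{lemma:1/4|Dw|_leq_K_2epsilon}(i)--(ii) (discrete mollification at a vanishing scale), $\hat f_j\to f$ in $L^1_\loc(\R^n)$, hence $D_{e_n}\hat f_j\to D_{e_n}f$ in $\mathcal{D}'(\R^n)$ and lower semicontinuity of the total variation gives $\liminf_j|D_{e_n}\hat f_j|(V)\ge|D_{e_n}f|(V)$ for every open $V$. Since $U_{r,j}\times(-r+\delta,r-\delta)\supset(-r+\eta,r-\eta)^n$ for $j$ large (any fixed $\eta>0$), we get $\beta(Q_r)\ge\tfrac14|D_{e_n}f|((-r+\eta,r-\eta)^n)$, and letting $\eta\to0$, $\beta(Q_r)\ge\tfrac14|D_{e_n}f|(Q_r)$; combined with $|D_{e_n}f|(Q_r)/|Df|(Q_r)\to1$ this proves the blow-up estimate. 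To globalize, fix $\eta>0$: for $|Df|$-a.e.\ $x_0$ there is $\rho(x_0)>0$ with $\beta(Q_r(x_0))\ge(\tfrac14-\eta)|Df|(Q_r(x_0))$ for $0<r<\rho(x_0)$. The cubes $\{Q_r(x_0)\}$ have bounded eccentricity (they are genuine cubes, merely rotated), so the Vitali covering theorem for the Radon measure $|Df|$ produces a countable pairwise disjoint subfamily $\{Q_{r_i}(x_i)\}_i$ with $|Df|\big(\R^n\setminus\bigcup_i Q_{r_i}(x_i)\big)=0$. Then $L\ge\sum_i\beta(Q_{r_i}(x_i))\ge(\tfrac14-\eta)\sum_i|Df|(Q_{r_i}(x_i))=(\tfrac14-\eta)|Df|(\R^n)$, and $\eta\to0$ gives~\eqref{eq:lower_bound_dimension_n}.

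\emph{Main obstacle.} The hard part is the blow-up estimate, and within it the slicing step: it must be arranged so that the energy of $f_j$ over \emph{cubes} controls, with the \emph{sharp} constant $\tfrac14$ rather than $\tfrac1{4n}$, the one-directional total variation of a piecewise constant approximation --- this is what forces the reduction to the single direction $\nu_0$ and the two-class split of the transverse cubes, and it is also where the mono-directionality of $BV$ functions re-enters, to convert the gained quantity $|D_{\nu_0}f|$ back into $|Df|$ in the limit. The remaining technical checks --- that $\hat f_j\to f$ in $L^1_\loc$, and that the one-dimensional estimate survives the restriction to $(-r,r)$ up to boundary errors vanishing as $\eps_j\to0$ --- are routine adaptations of the one-dimensional lemmas.
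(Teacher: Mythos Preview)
Your argument is correct and shares the blow-up skeleton with the paper's proof, but the two implementations differ both in how they localize and in the local computation itself. For localization, the paper builds auxiliary measures $\mu_\eps$ out of almost-maximizing families of cubes, extracts a weak-$*$ limit $\mu$, and reduces everything to the Radon--Nikodym inequality $\dd\mu/\dd|Df|\ge\tfrac14$; you instead work with the superadditive set function $\beta(A)=\liminf_j\mathsf K_{\eps_j}(f_j;A)$ and globalize at the end via a Vitali covering by rotated cubes. For the local estimate at $x_0\in S$, the paper uses the full $n$-dimensional piecewise-constant approximation $w^\delta$ on a $\delta$-mesh aligned with $\nu_0$, bounds $|D_{\nu_0}w^\delta|(P)\le 2^n\delta^{n-1}\Osc(w,P)$ for each $2\delta$-cube $P$, and then needs a ``cube-swap'' argument (the families $\overline{\Ocal}_\eps^{in}$, $\overline{\Hcal}_\eps^{in}$) to compare the local mesh against the global almost-maximizer inside $Q_r$; you instead slice---averaging over transverse $(n{-}1)$-cubes $Q'_\alpha$ to get one-dimensional functions $v_\alpha$---and apply the computation of Lemma~\ref{lemma:1/4|Dw|_leq_K_2epsilon}(iii) directly, choosing per slice the better of the two $1$D partitions so that the products $Q'_\alpha\times J$ assemble into a single admissible competitor in $Q_r$. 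Your route avoids both the auxiliary measures and the swap trick, at the price of the Vitali step; the paper's route makes the pointwise density estimate explicit and stays closer to the standard Fonseca--M\"uller template. Both extract the sharp constant from the same mechanism: because only the $\nu_0$-direction matters at blow-up, the ``two shifted partitions'' doubling of the one-dimensional argument is paid once rather than $n$ times.
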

	
	\begin{proof} We will split the proof into several steps. 
		\vspace{2.5mm} 
		
		\emph{Step 1. Notations and preparations.}
		To shorten the notation, we will write 
		\[
		\Osc(w,Q) \coloneqq \fint_{Q} \Big\vert w(x) - \fint_{Q} w\Big\vert \dd x
		\]
		for every $w \in L^1_\loc(\R^n)$ and every cube $Q\subset \R^n$. 
		Furthermore, it is clearly sufficient to show \eqref{eq:lower_bound_dimension_n} along every infinitesimal sequence $\e_j \to 0$. Accordingly, we fix any such sequence and set $f_j \coloneqq f_{\e_j}$. Observe also that it is not restrictive to assume that $\liminf_{j \to \infty} \mathsf K_{\eps_j}(f_{j}) < \infty$ and, up to a non-relabeled subsequence, we will assume 
		\[
		\lim_{j \to \infty} \mathsf K_{\eps_j}(f_{j}) = \liminf_{j \to \infty} \mathsf K_{\eps_j}(f_{j}). 
		\]
		The thesis is thus equivalent to 
		\begin{equation}\label{eq:lower_bound_dimension_n_version_j}
			\frac14 |Df|(\R^n)\leq \lim_{j \to \infty}\mathsf K_{\e_j}(f_j).
		\end{equation}
		Finally, in view of Proposition~\ref{prop:compactness}, we may assume that $|Df|(\R^n)< \infty$.
		\vspace{2.5mm} 
		
		\emph{Step 2. Definition of the measures and compactness.} For each $\e>0$, let $\overline{\mathcal H}_{\eps}$ be an almost maximizing family of disjoint ${\eps}$-cubes, i.e., such that 
		\begin{equation}\label{eq:K_eps_minus_eps}
			\Ksf_{\eps}(f_{\eps}) - \eps \le \eps^{n-1} \sum_{Q \in \overline{\mathcal H}_{\eps}} \Osc(f_\eps,Q) \le \Ksf_{\eps}(f_{\eps}).
		\end{equation}
		We now define a family of non-negative measures, absolutely continuous with respect to $\Lcal^n$, which have piecewise constant density. We set for $\eps>0$ 
		\[
		\mu_{\eps} := \eps^{n-1}\sum_{Q \in \overline{\mathcal H}_{\eps}} \frac{\Osc(f_\eps,Q) }{\Lcal^n(Q)} \cdot \1_Q\, \Lcal^n.
		\]
		By construction, it holds 
		\[
		\mu_\eps(\R^n) = \eps^{n-1}\sum_{Q \in \overline{\mathcal H}_{\eps}} \frac{\Osc(f_\eps,Q) }{\Lcal^n(Q)} \Lcal^n(Q) \le \Ksf_{\eps}(f_{\eps}),
		\]
		hence $(\mu_{\e_j})_{j}$ is a sequence of equi-bounded non-negative measures and therefore also pre-compact with respect to the weak-$*$ convergence of measures. Upon extracting a subsequence, we may assume that
		\[
		\mu_{\eps_j} \overset{*}{\rightharpoonup} \mu
		\]
		weak-$*$ in the sense of measures for a non-negative measure $\mu$ on $\R^n$. 
		\vspace{2.5mm} 
		
		\emph{Step 3. A Radon--Nikodym inequality.} Since both $\mu$ and $|Df|$ are non-negative Radon measures, we can consider the decomposition 
		\begin{equation*}
			\mu=\frac{\dd\mu}{\dd|Df|}|Df|+\mu^*,
		\end{equation*} 
		where $\mu^*$ is the singular part of $\mu$ with respect to $|Df|$. The derivative density
		\begin{equation}\label{eq:RN_mu_Df}
			\frac{\dd\mu}{\dd |Df|}(x) := \lim_{r \to 0} \frac{\mu(B_r(x))}{|Df|(B_r(x))}\,,
		\end{equation}
		is well-defined $|Df|$-a.e. and defines a non-negative function in $L^1(\R^n,|Df|)$. The strategy of our proof resides in establishing the following lower bound for the Radon--Nikodym's derivative: 
		\begin{equation}\label{eq:mu_Du_geq_14}
			\frac{\dd \mu}{\dd |Df|} \ge \frac 14 \qquad \text{$|Df|$-almost everywhere}.
		\end{equation}
		Let us first show that, once \eqref{eq:mu_Du_geq_14} is proven, then \eqref{eq:lower_bound_dimension_n_version_j} easily follows. Indeed, since $\mu$ and $\mu^*$ are non-negative measures, we have the following chain of inequalities:
		\begin{align*}
			\lim_{j \to \infty} \mathsf K_{\eps_j}(f_j) & \ge \liminf_{j \to \infty} \mu_{\eps_j}(\R^n) \\
			& \geq \mu(\R^n)\\ 
			&=\int_{\R^n} \frac{\dd \mu}{\dd |Df|}(x)\, \dd |Df|(x) + \mu^*(\R^n)  \\
			& \ge \int_{\R^n} \frac{\dd \mu}{\dd |Df|}(x) \, \dd |Df|(x) \\
			& \ge \frac 14 |Df|(\R^n),
		\end{align*}
		where in the last inequality, we have used \eqref{eq:mu_Du_geq_14}. In the remaining steps of the proof, we will thus focus on proving \eqref{eq:mu_Du_geq_14}. We will achieve this goal by appealing to the mono-directionality property of $BV$ functions expressed in Lemma \ref{lemma:monodirectional}. 
		\vspace{2.5mm} 
		
		\emph{Step 4. Computation of Radon--Nikodym derivative.} Let us consider the function $f$ and its polar $\nu$, defined in \eqref{eq:def_polar}. Recall that, for every $x_0 \in \R^n$ such that the limit in \eqref{eq:def_polar} exists and has unit norm, we denote it by $\nu_0 := \nu(x_0)$.  
		For every $r>0$, we introduce the cube 
		\[
		Q_r^{\nu_0}(x_0) = Q_r(x_0) \coloneqq x_0 + rQ_1, 
		\]
		where $Q_1$ denotes any open cube of side-length $2$, centered at $0$ and with one of its $(d-1)$-dimensional faces orthogonal to $\nu_0$.
		
		Recalling Remark~\ref{rem:general_lebesgue}, up to a $|Df|$-negligible set $S \subset \R^n$, every point $x_0 \in \R^n$ satisfies the conclusion of Lemma \ref{lemma:monodirectional} with $B_r(x_0)$ replaced by $Q_r(x_0)$. Therefore, by \eqref{eq:RN_mu_Df} and Lemma \ref{lemma:monodirectional}, for $|Df|$-a.e. point $x_0$ it holds  
		\begin{equation}\label{eq:RN}
			\begin{split}
				\frac{\dd \mu}{\dd |Df|}(x_0) &
				= \frac{\dd \mu}{\dd |D_{\nu_0} f|}(x_0) \\
				&
				= \lim_{k \to \infty} \frac{\mu(Q_{r_k}(x_0))}{|D_{\nu_0} f|(Q_{r_k}(x_0))}\\
				& 
				= \lim_{k \to \infty} \, \lim_{j \to \infty} \frac{\mu_{\eps_j}(Q_{r_k}(x_0))}{|D_{\nu_0} f|(Q_{r_k}(x_0))},
			\end{split}
		\end{equation}
		provided that we choose an infinitesimal sequence $(r_k)_k$ so that $\mu(\partial Q_{r_k}(x_0))=0$ (see \cite[Prop.~1.62(b)]{AFP}). 
		
		Let us fix from now on  such an $x_0$. Up to a rotation, we may assume $\nu_0=e_1=(1,0, \ldots, 0)$ and thus $Q_1 = \left(-\frac12,\frac12\right)^{n}$. Recall that the family $\mathcal Q_\e$ was defined in \eqref{eq:def_mesh_cubes} as 
		\[
		\Q_\e:=\{(0,\e)^n+\e z : z \in \Zbb^n\} 
		\]
		and its translated by $\tau\in \R^n$ was defined in \eqref{eq:def_mesh_cubes_translated} as 
		\[
		\tau+\Q_\e\coloneqq \{\tau+Q:Q\in\Q_\e\}.
		\]
		Let us now set 
		\[
		2\delta \coloneqq \eps.
		\]
		For a given $P\in\Q_\e=\Q_{2\delta}$, let us denote by $\Ncal_\delta(P,\nu_0)$ the family of unordered pairs $\{Q,Q'\}$ of distinct cubes $Q,Q'\in \Q_\delta$ that are contained in $P$ and that share an $(n-1)$-dimensional face parallel to $\nu_0^\perp$.
		
		We now set for simplicity $w\coloneqq f_\e$ and write
		\[
		w^{\delta} \coloneqq \sum_{Q \in \Qcal_\delta} \1_Q w_Q,\quad w_Q\coloneqq \fint_Q w.
		\]
		Applying \eqref{eq:jump_betwen_Q_Q'} with $m=w_P$ we have 
		\begin{align}\label{eq:estimate_on_P}
			\begin{aligned}
				|D_{\nu_0} w^\delta|(P)&= \sum_{\{Q,Q'\}\in \Ncal_\delta(P,\nu_0)} |w_Q-w_{Q'}|\delta^{n-1}\\
				&\leq \delta^{n-1}\sum_{\{Q,Q'\}\in \Ncal_\delta(P,\nu_0)} \left(\fint_Q|w-w_P|+\fint_{Q'}|w-w_P|\right)\\
				&= \delta^{n-1}\sum_{\{Q,Q'\}\in \Ncal_\delta(P,\nu_0)} \frac{1}{\delta^n}\int_{Q\cup Q'}|w-w_P|\\
				&=2^n\delta^{n-1}\fint_P|w-w_P|
			\end{aligned}
		\end{align}
		where we have used that $\Lcal^n(P)=2^n\Lcal^n(Q)=2^n\Lcal^n(Q')$ and that, up to $\Lcal^n$-negligible sets, $P$ is the disjoint union
		\[
		P=\bigcup_{\{Q,Q'\}\in \Ncal_\delta(P,\nu_0)}Q\cup Q'.
		\]
		
		\begin{figure}
			\captionsetup[subfigure]{labelformat=empty,width=.6\columnwidth}
			\subfloat[The original mesh $\Qcal_{\e}$ (dashed) and the translated mesh $\delta \nu_0 + \Qcal_{\e}$ (gray). The directional derivative of $w^\delta$ in direction $\nu_0$ sees only jumps across 1-dimensional faces (of cubes in $\Q_\delta$) that are parallel to $\nu_0^\perp$. Such faces appear exactly once in the interior of a cube $P$ belonging to either $\Q_{\e}$ or $\delta\nu_0+\Q_{\e}$.]{
				\begin{tikzpicture}[scale=.8]
					\draw[thick,white,->] (-5,0) -- (-4,0) node[anchor=south]{\tiny{$\nu_0$}};
					\filldraw[white] (0,-4) circle (2.5pt);
					\draw[thick,->] (4,0) -- (5,0) node[anchor=south]{\tiny{$\nu_0$}};
					\draw[thick,->] (4,0) -- (4,1) node[anchor=south]{\tiny{$\nu_0^\perp$}};
				
					\foreach \x in {-3,-1,...,3}{
						\draw[black!10, line width=2pt] (\x,-3.5) -- (\x,3.5);
					}
					\foreach \y in {-2,0,2}{
						\draw[black!10, line width=2pt] (-3.5,\y) -- (3.5,\y);
					}
					
					\foreach \x in {-2,0,2}{
						\draw[black, loosely dashed, thick] (\x,-3.5) -- (\x,3.5);
					}
					\foreach \y in {-2,0,2}{
						\draw[black, loosely dashed, thick] (-3.5,\y) -- (3.5,\y);
					}
					
					
					\draw[->,  very thick] (0,1) -- (1,1);
					\filldraw (0.5,1.5) circle (.0pt) node {\scriptsize  $\delta$};
					
					\filldraw (0,0) circle (2.5pt);
				\end{tikzpicture}
			}
			\caption{}\label{fig:horizontal_translation}
		\end{figure}
		
		Observe that every $(n-1)$-dimensional face (of a cube in $\Q_\delta$) that is orthogonal to $\nu_0$ appears exactly once in the interior of a cube $P$ belonging to either $\Q_{\e}$ or $\delta\nu_0+\Q_{\e}$, see Fig. \ref{fig:horizontal_translation}. Thus by \eqref{eq:estimate_on_P} we have
		\begin{align}
			|D_{\nu_0} w^\delta|(&Q_{r-\sqrt{n}\e}(x_0))= \sum_{\substack{P\in \Q_\e\\ P\subseteq Q_{r-\sqrt{n}\e}(x_0)}}|D_{\nu_0}w^\delta|(P)+\sum_{\substack{P\in \delta\nu_0+\Q_\e\\ P\subseteq Q_{r-\sqrt{n}\e}(x_0)}}|D_{\nu_0}w^\delta|(P)\nonumber\\
			&\leq \sum_{\substack{P\in \Q_\e\\ P\subseteq Q_{r-\sqrt{n}\e}(x_0)}} 2^n\delta^{n-1}\Osc(w,P)+\sum_{\substack{P\in \delta\nu_0+\Q_\e\\ P\subseteq Q_{r-\sqrt{n}\e}(x_0)}}2^n\delta^{n-1}\Osc(w,P).\label{eq:split_odd_even}
		\end{align}
		Let us estimate the first term from above; the estimate for the second is analogous. First, let us split the family $\overline{\Hcal}_\e$ as $\overline{\Hcal}_\e=\overline{\Hcal}_\e^{in}\cup\overline{\Hcal}_\e^{out}$, where
		\[
		\overline{\Hcal}_\e^{in}\coloneqq \{P\in \overline{\Hcal}_\e: P\cap Q_{r-\sqrt{n}\e}(x_0)\neq \emptyset\},
		\]
		\[\overline{\Hcal}_\e^{out}\coloneqq \{P\in \overline{\Hcal}_\e: P\cap Q_{r-\sqrt{n}\e}(x_0)= \emptyset\}.
		\]
		We now construct the following modification of the family $\overline{\Hcal}_\e$ inside $Q_r$:
		\[
		\overline{\Ocal}_\e^{in}\coloneqq \{P\in \Q_\e : P\subseteq Q_{r-\sqrt{n}\e}(x_0)\},\qquad \overline{\Ocal}_\e^{out}\coloneqq \overline{\Hcal}_\e^{out}.
		\]
		Since $\overline{\Ocal}_\e\coloneqq \overline{\Ocal}_\e^{in}\cup\overline{\Ocal}_\e^{out} $ is a family of disjoint $\e$-cubes, recalling \eqref{eq:K_eps_minus_eps} it certainly holds
		\[
		\e^{n-1}\sum_{P\in \overline{\Ocal}_\e} \Osc(w,P)\leq \mathsf K_\e(w)\leq \e+\e^{n-1}\sum_{P\in \overline{\Hcal}_\e}\Osc(w,P),
		\]
		whence, splitting the sums and simplifying the sum over $\overline{\Ocal}_\e^{out}= \overline{\Hcal}_\e^{out}$, we obtain
		\begin{equation}\label{eq:burrata}
			\e^{n-1}\sum_{P\in \overline{\Ocal}_\e^{in}} \Osc(w,P)\leq \e+\e^{n-1}\sum_{P\in \overline{\Hcal}_\e^{in}}\Osc(w,P).
		\end{equation}
		We observe now that by construction every cube $P\in\overline{\Hcal}_\e^{in}$ is contained in $Q_r$  so that
		\begin{equation}\label{eq:estimate_with_mu_eps}
			\e^{n-1}\sum_{P\in \overline{\Hcal}_\e^{in}}\Osc(w,P)\leq \mu_\e (Q_r).
		\end{equation}
		Combining \eqref{eq:burrata} and \eqref{eq:estimate_with_mu_eps} we conclude that
		\[
		\sum_{\substack{P\in \Q_\e\\ P\subseteq Q_{r-\sqrt{n}\e}(x_0)}} 2^n\delta^{n-1}\Osc(w,P)=2 \e^{n-1}\sum_{P\in \overline{\Ocal}_\e^{in}} \Osc(w,P)\leq 2(\e +\mu_\e(Q_r)).
		\]
		Since an analogous estimate holds for the second sum in \eqref{eq:split_odd_even} we conclude that
		\begin{equation}\label{eq:estimate_total_derivative}
			|D_{\nu_0} w^\delta|(Q_{r-\sqrt{n}\e}(x_0))\leq 4(\e+\mu_\e(Q_r)).
		\end{equation}
		
		\vspace{2.5mm} 
		\emph{Step 5. Conclusion.} 
		Recalling that $w=f_\e$ and $2\delta=\e$, we have that $w^\delta \to f$ in $L^1_\loc$ as $\delta \to 0$. Indeed, for any cube $D_R = (-R,R)^n$ and $\delta$ sufficiently small we get by Lemma \ref{lemma:properties_of_piecewise_general_dimension} (i)
		\begin{align*}
			\Vert w^\delta - f \Vert_{L^1(D_R)} & \le \Vert f^{\e/2}_\e - f^{\e/2} \Vert_{L^1(D_R)} + \Vert  f^{\e/2} -f \Vert_{L^1(D_R)} \\
			& \le \Vert f_\e - f \Vert_{L^1(D_{R+1})} + \Vert  f^{\e/2} -f \Vert_{L^1(D_R)}.
		\end{align*}
		As $\delta \to 0$ (so that $\e \to 0$), the first term in RHS goes to zero by assumption, 
		while the second term tends to zero thanks to Lemma \ref{lemma:properties_of_piecewise_general_dimension} (ii). This proves that $\omega^\delta \to f$ in $L^1_\loc$.
		
		By the lower semicontinuity of the total variation and \eqref{eq:estimate_total_derivative}, for any fixed $\Omega\Subset Q_r(x_0)$ we therefore have the inequality
		\begin{align*}
			|D_{\nu_0} f|(\Omega) & \le  \liminf_{\delta \to 0} |D_{\nu_0} w^\delta|(\Omega) \\
			& \le \liminf_{\delta_j \to 0} |D_{\nu_0} w^{\delta_j}|(Q_{r-\sqrt{n}\e_j}(x_0)) \\
			&\leq \liminf_{\delta_j\to 0} 4(\e_j+\mu_{\e_j}(Q_r(x_0)))\\
			&= 4 \mu(Q_r(x_0)).
		\end{align*}
		Taking the supremum over all compact subsets of $Q_{r}(x_0)$ we conclude that
		\[
		|D_{\nu_0} f|(Q_{r}(x_0)) \le  4 \mu(Q_{r}(x_0)),
		\]
		which in light of~\eqref{eq:RN} gives the estimate
		\[
		\frac{\dd \mu}{\dd |Df|}(x_0) \ge \frac 14 \qquad \text{for all $x_0 \in S$}.
		\]
		This proves~\eqref{eq:mu_Du_geq_14} and the proof is complete.
	\end{proof}

	\section{\texorpdfstring{$\Gamma$}{Gamma}-limsup inequality}\label{sec:upper_bound}
	To complete the proof of Theorem~\ref{thm:main} we need to establish the $\Gamma$-limsup inequality. This task is addressed in this section. The following lemma, which follows directly from \cite[Thm.~3.3]{FMS}, illustrates the (pointwise) behavior of the energies $\mathsf{K}_\e$ on smooth functions.
	
	\begin{lemma}\label{lemma:smooth} 
		If $f \in C^\infty(\R^n)$, then 
		\[
		\lim_{\e\to 0}\mathsf{K}_\e(f)=\frac14 \int_{\R^n} |\nabla f(x)| \, \dd x.
		\]
	\end{lemma}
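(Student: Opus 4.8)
The plan is to reduce everything to the pointwise limit result for $SBV_{\loc}$ functions, equation~\eqref{eq:DFP}, which gives $\lim_{\e\to 0}\mathsf K_\e(f)=\tfrac14|D^af|(\R^n)+\tfrac12|D^jf|(\R^n)$ whenever $f\in SBV_{\loc}$. First I would observe that a smooth function $f\in C^\infty(\R^n)$ has no jump part and no Cantor part: its distributional derivative is the absolutely continuous measure $Df=\nabla f\,\Lcal^n$, so $D^jf=0$, $D^cf=0$, and $|D^af|(\R^n)=\int_{\R^n}|\nabla f(x)|\,\dd x$. In particular any $f\in C^\infty(\R^n)$ with $|Df|(\R^n)<\infty$ lies in $SBV_{\loc}(\R^n)$ (indeed in $W^{1,1}_{\loc}$), so \eqref{eq:DFP} applies verbatim and yields exactly the claimed identity.

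The one genuine subtlety is that \eqref{eq:DFP} as quoted presupposes $f\in SBV_{\loc}$, and the cited reference \cite[Thm.~3.3]{FMS} is the precise statement to invoke; so the bulk of the proof is simply to check that a smooth function satisfies the hypotheses of that theorem and then to specialize its conclusion. If $|Df|(\R^n)=\infty$ then $\int_{\R^n}|\nabla f|\,\dd x=\infty$ as well, and in that case one should note that $\mathsf K_\e(f)\to\infty$ too — this follows from the lower bound $\tfrac14|Df|(\R^n)\le\liminf_{\e\to 0}\mathsf K_\e(f)$ already recorded in \eqref{eq:FMS} (or, if one prefers to keep the section self-contained, from the liminf inequality proved in Proposition~\ref{prop:lower_bound_dimension_n} applied to the constant family $f_\e=f$), so the asserted equality holds in the extended-real sense. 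Thus there is no real case distinction needed beyond this remark.

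The main (and only) obstacle is therefore bookkeeping rather than mathematics: making sure the hypotheses of \cite[Thm.~3.3]{FMS} are met by smooth functions — in particular that the theorem is stated for $L^1_{\loc}$ or $BV_{\loc}$ functions and not only for $BV(\R^n)$, and that the energy $\mathsf K_\e$ there coincides with ours (same normalization $\e^{n-1}$, arbitrary orientation, arbitrary cardinality of the cube family). Once this is confirmed, the proof is a single line: apply \eqref{eq:DFP} with $D^jf=0$. I would write it as: ``Let $f\in C^\infty(\R^n)$. Then $Df=\nabla f\,\Lcal^n$, so $Df$ has neither jump nor Cantor part, i.e.\ $f\in SBV_{\loc}(\R^n)$ with $|D^af|(\R^n)=\int_{\R^n}|\nabla f|\,\dd x$ and $|D^jf|(\R^n)=0$. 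Plugging this into \eqref{eq:DFP} gives $\lim_{\e\to 0}\mathsf K_\e(f)=\tfrac14\int_{\R^n}|\nabla f(x)|\,\dd x$, which is the claim; the case $\int_{\R^n}|\nabla f|=\infty$ follows from the lower bound in \eqref{eq:FMS}.''
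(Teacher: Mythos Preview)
Your proposal is correct and matches the paper's approach: the paper does not give a proof of this lemma at all, stating only that it ``follows directly from \cite[Thm.~3.3]{FMS}'', which is precisely the specialization of~\eqref{eq:DFP} to smooth functions that you carry out. Your additional remark covering the case $\int_{\R^n}|\nabla f|=\infty$ via the lower bound in~\eqref{eq:FMS} is a sensible piece of bookkeeping that the paper leaves implicit.
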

	
	We can now prove the $\Gamma$-limsup inequality. 
	
	\begin{proposition}[$\Gamma$-limsup]\label{prop:gamma-limsup} For every $f \in L^1_\loc(\R^n)$, there exists a family $(f_\e)_{\e}$ such that $f_\e \to f$ in $L^1_\loc(\mathbb R^n)$ and 
		\begin{equation}\label{eq:gamma-limsup}
			\limsup_{\e \to 0} \mathsf K_{\e} (f_\e) \le \frac{1}{4} |Df|(\R^n). 
		\end{equation}
	\end{proposition}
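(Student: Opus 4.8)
The plan is to reduce the general case to the smooth case via a density argument, using a standard diagonal extraction to combine the mollification approximation with the recovery sequences on smooth functions. First I would dispose of the trivial case: if $|Df|(\R^n) = \infty$ there is nothing to prove, since we may simply take the constant family $f_\e = f$ (or any family converging to $f$) and the right-hand side of~\eqref{eq:gamma-limsup} is $+\infty$. So assume $|Df|(\R^n) < \infty$, i.e.\ $Df$ is a finite measure.

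The core of the argument rests on Lemma~\ref{lemma:smooth}: for $g \in C^\infty(\R^n)$ we have $\lim_{\e \to 0}\mathsf K_\e(g) = \tfrac14\int_{\R^n}|\nabla g|\,\dd x = \tfrac14|Dg|(\R^n)$. The natural approximating family for a general $f$ is its mollification $g_\eta := f \ast \rho_\eta$, where $\rho_\eta$ is a standard mollifier. Since $|Df|(\R^n) < \infty$ we have the two classical facts: $g_\eta \to f$ in $L^1_\loc(\R^n)$ as $\eta \to 0$, and $\int_{\R^n}|\nabla g_\eta|\,\dd x \le |Df|(\R^n)$ for every $\eta > 0$ (lower semicontinuity of the total variation under mollification, or directly $\nabla g_\eta = (Df)\ast\rho_\eta$ so $\|\nabla g_\eta\|_{L^1} \le |Df|(\R^n)$). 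Combining with Lemma~\ref{lemma:smooth},
\[
\limsup_{\e \to 0}\mathsf K_\e(g_\eta) = \frac14\int_{\R^n}|\nabla g_\eta|\,\dd x \le \frac14|Df|(\R^n)\qquad\text{for every }\eta > 0.
\]
Now one performs a diagonal argument: for each $\eta = 1/m$, pick $\e_m > 0$ small enough that $\|g_{1/m} - f\|_{L^1(D_m)} \le 1/m$ (possible since $g_{1/m}$ is a fixed function once $m$ is fixed—actually this term does not even depend on $\e$) and $\mathsf K_{\e}(g_{1/m}) \le \tfrac14|Df|(\R^n) + 1/m$ for all $\e \le \e_m$, with $\e_m$ decreasing to $0$. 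Then define $f_\e := g_{1/m}$ for $\e \in (\e_{m+1}, \e_m]$. One checks $f_\e \to f$ in $L^1_\loc$ and $\limsup_{\e\to 0}\mathsf K_\e(f_\e) \le \tfrac14|Df|(\R^n)$.

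I do not expect a serious obstacle here; this is the easy half of the $\Gamma$-convergence statement, exactly as anticipated in the ``Comments on the proof'' paragraph of the introduction. The only points requiring a little care are: (a) making sure the diagonalization is set up so that both the $L^1_\loc$-convergence and the energy bound survive simultaneously—this is a routine ``diagonal lemma'' (see e.g.\ \cite[Cor.~1.18]{DM} or \cite[Lemma~1.15]{AFP}-type statements), and (b) the bound $\|\nabla g_\eta\|_{L^1(\R^n)} \le |Df|(\R^n)$, which follows since convolution with a probability kernel does not increase the total variation. No new ideas beyond Lemma~\ref{lemma:smooth} and the density of $C^\infty$ in $L^1_\loc$ are needed.
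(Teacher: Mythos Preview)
Your proposal is correct and follows essentially the same approach as the paper: both reduce to Lemma~\ref{lemma:smooth} via mollification and the density of $C^\infty$ in $L^1_\loc$ together with the bound $\|\nabla(f\ast\rho_\eta)\|_{L^1}\le |Df|(\R^n)$. The only difference is presentational---the paper invokes the standard $\Gamma$-limsup density reduction in one sentence, whereas you spell out the diagonal extraction explicitly (with the minor caveat that one should choose $\eta_m\to 0$ small enough that $\|g_{\eta_m}-f\|_{L^1(D_m)}\le 1/m$, rather than literally $\eta_m=1/m$).
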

	
	\begin{proof}
		Observe that it is not restrictive to assume that $|Df|(\R^n)<\infty$, otherwise there would be nothing to prove.  By standard facts of $\Gamma$-convergence, it is enough to prove \eqref{eq:gamma-limsup} on a subset $\Dcal \subset \{g \in C^\infty(\R^n) : |Dg| <\infty\}$ that is dense with respect to the topology induced by the $L^1_\loc$ metric and the convergence of the total variation seminorm. 
		By standard mollification arguments we can select $\Dcal =C^\infty(\mathbb R^n)$. Then, for an arbitrary $f \in \Dcal$ we simply consider the family consisting of the elements $f_{\e}:=f$ for every $\e>0$. By Lemma~\ref{lemma:smooth} we conclude that
		\begin{equation*}
			\limsup_{\e\to 0} \mathsf K_{\e}(f_{\e}) = \limsup_{\e\to 0} \mathsf K_{\e}(f) = \frac{1}{4}|Df|(\R^n).
		\end{equation*}
		and this yields the desired conclusion. 	
	\end{proof}

\end{document}